\theoremstyle{plain}
\newtheorem{theorem}{Theorem}[section]
\newtheorem{lemma}[theorem]{Lemma}
\newtheorem{corollary}[theorem]{Corollary}
\theoremstyle{definition}
\newtheorem{definition}[theorem]{Definition}
\theoremstyle{remark}
\newtheorem{remark}[theorem]{Remark}
\newcommand{\R}{\mathbb{R}}
\newcommand{\CC}{\mathbb{C}}
\newcommand{\Sp}{\mathbb{S}}
\newcommand{\BMO}[0]{\operatorname{BMO}}
\newcommand{\VMO}[0]{\operatorname{VMO}}
\newcommand{\Lip}[0]{\operatorname{Lip}}
\newcommand{\supp}[0]{\operatorname{supp}}
\newcommand{\eps}[0]{\varepsilon}
\newcommand{\abs}[1]{\left| #1 \right|}
\newcommand{\dif}{\mathop{}\!\mathrm{d}}
\newcommand{\ave}[1]{\langle #1\rangle}
\newcommand{\loc}[0]{\operatorname{loc}}
\newcommand{\wt}[1]{\widetilde{#1}}
\begin{document}
\title[Compactness of commutators of rough singular integrals]{Compactness of commutators of rough singular integrals}

\begin{abstract}
    We study the two-weighted off-diagonal compactness of commutators of rough singular integral operators $T_\Omega$ that are associated with a kernel $\Omega\in L^q(\Sp^{d-1})$. We establish a characterisation of compactness of the commutator $[b,T_\Omega]$ in terms of the function $b$ belonging to a suitable space of functions with vanishing mean oscillation. Our results expand upon the previous compactness characterisations for Calder\'on-Zygmund operators. Additionally, we prove a matrix-weighted compactness result for $[b,T_\Omega]$ by applying the so-called matrix-weighted Kolmogorov-Riesz theorem.
\end{abstract}

\author{Aapo Laukkarinen}
\address{Aapo Laukkarinen, Department of Mathematics and Systems Analysis, Aalto University, P.O. Box 11100, FI-00076 Aalto, Finland}
\email{aapo.laukkarinen@aalto.fi}
\author{Jaakko Sinko}
\address{Jaakko Sinko, Department of Mathematics and Statistics, University of Helsinki, P.O. Box 68, FI-00014 Helsinki, Finland}
\email{jaakko.sinko@helsinki.fi}

\maketitle

\section{Introduction}
The commutator $[b,T]$ is defined by 
\[
[b,T]f = bTf - T(bf),
\]
where $T\colon L^2(\R^d)\to L^2(\R^d)$ is a singular integral operator and $b$ is a locally integrable complex-valued function on $\R^d$. The characterisation of the compactness of $[b,T]\colon L^u(\mu)\to L^v(\lambda)$, where $\mu\in A_u$ and $\lambda\in A_v$, via the membership of $b$ in some function space has steadily seen more results in recent years. We mention here the $1<u=v<\infty$ case of \cite{LL2022}, the $1<u<v<\infty$ case of \cite{HOS2023} and the unweighted $1<v<u<\infty$ case of \cite{HLTY2023}. When it comes to the first two papers, they give their characterisations for $T$ that are associated to what might be called standard kernels or Calder\'on-Zygmund kernels: they are kernels that satisfy standard size and smoothness estimates.

For rough homogeneous kernels 
\[
K_\Omega(x,y)=\Omega\left(\frac{x-y}{|x-y|}\right)\frac{1}{|x-y|^d},
\]
where $\Omega$ has vanishing integral over the unit sphere $\Sp^{d-1}$, the corresponding singular integral operator $T_\Omega$ is formally defined as 
\[
T_\Omega f(x) = \lim_{\eps \to 0} \int_{y\in \R^d\colon |x-y|>\eps} K_\Omega(x,y) f(y) \dif y.
\]
The characterisation result of the paper \cite{HLTY2023} handles both standard kernels and homogeneous kernels. The word ``rough'' indicates that $\Omega$ is not assumed to have continuity, and it only satisfies some integrability condition.

For rough kernels, the compactness characterisation has more parts missing than the corresponding boundedness characterisation. Let us briefly consider some recent advances for the characterisation of boundedness of $[b,T_\Omega]$. Although the present paper is more about the compactness, the study of boundedness shares some methods with the study of compactness. Both a proof of the suitable upper bound for $\|[b,T_\Omega]\|_{L^u(\mu)\to L^v(\lambda)}$ and the suitable lower bound have garnered interest, often in separate papers. When it comes to the upper bound, K.\ Li (\cite{KLi2022}) sketched the proof of 
\begin{equation}\label{eq:kli}
\|[b,T_\Omega]\|_{L^u(\mu)\to L^u(\lambda)} \lesssim \|b\|_{\BMO_\nu}.
\end{equation}
The quantity $\|b\|_{\BMO_\nu}$ is finite if and only if $b$ has bounded mean oscillation with respect to the weight $\nu=\mu^{1/u}\lambda^{-1/u}$. Later, in \cite{LLO2024}, the authors revisited \eqref{eq:kli} and extended it to all $1<u,v<\infty$, with $\BMO_\nu$ replaced by a suitable function space when $u\neq v$. A common assumption to both \cite{KLi2022} and \cite{LLO2024} is that $\Omega \in L^\infty(\Sp^{d-1})$. 

The scheme in  \cite{LLO2024} is that the boundedness of $\Omega$ allows one to control a certain grand maximal function related to $T_\Omega$, and this implies a sparse domination result for the commutators of $T_\Omega$, which is then used as the main ingredient in the proof of the off-diagonal two weighted boundedness. The grand maximal control of $T_\Omega$ is not known  when $\Omega$ is unbounded, and it seems to be a difficult problem. However, in \cite{Lau2023} it was shown that the vector-valued technique called convex body domination also implies the aforementioned sparse domination of commutators. Subsequently, in a recent preprint \cite{Lau2024} convex body domination was proven for singular integrals $T_\Omega$ with  $\Omega\in L^{q,1}\log L(\Sp^{d-1})$ (see Definition \ref{definition:lorentzorlich}), $1<q<\infty$, which then led to the $u\leq v$ upper bound
\begin{equation}\label{eq:al}
\|[b,T_\Omega]\|_{L^u(\mu)\to L^v(\lambda)} \lesssim \|b\|_{\BMO_\nu^\alpha},
\end{equation}
where $\mu\in A_u\cap RH_{\left(\frac{q}{u}\right)'}$, $\lambda\in A_v\cap RH_{\left(\frac{q}{v}\right)'}$, $\alpha/d=1/u-1/v$ and $\nu^{1+\alpha/d}=\mu^{1/u}\lambda^{-1/v}$. Also, one assumes that $1<u,v<q$ and thus the integrability parameter $q$ of $\Omega$ limits the allowed exponent range for $u$ and $v$.

The main result of this paper is a characterisation for the $u\leq v$ compactness of $[b,T_\Omega]$; this result can be seen as a sequel to \cite{Lau2024}. In particular, we show that if $\lambda,\mu$ and $\alpha$ are as in \eqref{eq:al} and  $\Omega\in L^q(\Sp^{d-1})$ is not zero, then \begin{equation}[b,T_\Omega]\colon L^u(\mu)\to L^v(\lambda) \text{ is compact if and only if } b\in \VMO^\alpha_\nu(\R^d).\end{equation} 
This is exactly what was proven in \cite{HOS2023} for standard kernels. For rough $\Omega$ the missing piece in \cite{HOS2023} was a suitable sparse domination for $[b,T_\Omega]$, and thus the direction ``$b\in \VMO_\nu^\alpha$ $\Rightarrow$ $[b,T_\Omega]\colon L^u(\mu)\to L^v(\lambda)$ compact'' could not be stated for rough kernels in that paper. The idea of the proof of this direction is to approximate $[b,T_\Omega]$ with compact commutators $[b,T_\eps]$ and use \eqref{eq:al} to prove that the error commutator term has operator norm that converges to zero. An $L^q(\Sp^{d-1})$ version of the inequality \eqref{eq:al} and a density argument reduces the problem to showing compactness of commutators with Lipschitz functions $\Omega$, which follows from the standard kernel result of \cite{HOS2023}.
For the other direction ``$[b,T_\Omega]\colon L^u(\mu)\to L^v(\lambda)$ compact $\Rightarrow$ $b\in \VMO_\nu^\alpha$'', the methods of \cite{HOS2023} work directly. 

For the sake of comparison with an existing result, we say a few words regarding the quite general sparse domination result of \cite[Theorem 3.2]{LLO2024}. The authors of \cite{LLO2024} successfully applied their sparse domination to prove boundedness of commutators $[b,T_\Omega] \colon L^u(\mu) \to L^v(\lambda)$ under the assumption $\Omega \in L^\infty(\Sp^{d-1})$. However, since $\Lip(\Sp^{d-1})$ is not dense in $L^\infty(\Sp^{d-1})$, one can not pair their sparse domination with the same density argument that we use. This is the benefit of the assumption $\Omega \in L^q(\Sp^{d-1})$. It allows us to reduce our proof associated to rough homogeneous kernels to the analogous proof associated to standard kernels via a density argument.

We will also study compactness of commutators in a matrix-weighted space. The main tool we will use to tackle matrix-weighted compactness is the matrix-weighted Kolmogorov-Riesz theorem of \cite{LYZ2023} (see Theorem \ref{thm:kolmogorovriesz}). In particular, we will show that for $b\in\VMO$, $1<q<\infty$, $q'<p<\infty$, $\Omega\in L^{q+\eps}(\Sp^{d-1})$, $\eps>0$, the commutator $[b,T_\Omega]$ is $L^p(W)\to L^p(W)$ compact.

The rest of the paper is organised as follows. In Section \ref{sec:prelims} we gather the relevant definitions and some preliminary results that will be used later in the paper. In Section \ref{sec:sufficiencyforcompactness} we will prove the main result of this paper, that is, $b\in \VMO^\alpha_\nu$ is a sufficient condition for Bloom-type compactness. Then in Section \ref{sec:necessityforcompactness} we will see that the methods of \cite{HOS2023} can be used to prove that $b\in \VMO^\alpha_\nu$ is also a necessary condition. Lastly, in Section \ref{sec:matrixweightedcompactness} we will consider matrix-weighted compactness.

\subsection{Notation}Some of the used notation is summarized in the following table:

\begin{longtable}{c p{0.7\textwidth}}
$\mu$ & $A_u$ weight in $\R^d$.  \\
$\lambda$ & $A_v$ weight in $\R^d$.  \\
$u'$ & Conjugate exponent of $u\in(1,\infty)$: $\frac{1}{u}+\frac{1}{u'}=1$.\\
$\alpha$ & Exponent defined by $\frac{\alpha}{d}=\frac{1}{u}-\frac{1}{v}$.\\
$\nu$ & Bloom weight defined by
$
\nu^{1+\alpha/d} = \mu^{1/u}\lambda^{-1/v}.
$ \\

\\

$1_E$ & Indicator function of the set $E\subset \R^d$. \\
$\ave{f}_E$ & Average: $\ave{f}_E=\frac{1}{|E|}\int_E f(x) \dif x$. \\
$w(E)$ & Weighted measure $w(E)=\int_E w(x) \dif x$. \\

\\

$Q$ & A cube in $\R^d$ with sides parallel to the coordinate axes. \\
$\ell(Q)$ & The side length of a cube $Q$. \\
$\Sp^{d-1}$ & The unit sphere $\{x \in \R^d : |x|=1\}$. \\ 
$\|b\|_{\BMO_w^{\alpha}}$ & $\|b\|_{\BMO_w^{\alpha}} \! = \! \|b\|_{\BMO_w^{\alpha}(\R^d)} \! = \! \sup_{Q} \frac{1}{w(Q)^{1+\alpha/d}}\int_{Q}|b(x)-\ave{b}_{Q}| \dif x$. \\
$\|f\|_{L^p(w)}$ & $\|f\|_{L^p(w)}=\left(\int_{\R^d} |f(x)|^pw(x) \dif x \right)^{1/p}$. \\

\\

$A \lesssim_P B$ & Inequality with an implicit constant $C_P>0$ that depends at most on parameters $P$, i.e.\ $A \leq C_P B$. \\
$A \eqsim_P B$ & Indicates that both $A \lesssim_P B$ and $B \lesssim_P A$ hold.
\end{longtable}

\section{Preliminaries}\label{sec:prelims}

Throughout, unless specified otherwise, all function spaces ($\BMO_w^\alpha(\R^d)$, \\ $\VMO_w^\alpha(\R^d)$, $L^p(w)$, $\Lip(\Sp^{d-1})$, $L^{q,1}\log L(\Sp^{d-1})$, $L^q(\Sp^{d-1})$, $\ldots$) are allowed to contain complex-valued measurable functions.

By a positive weight we mean a locally integrable function $w$ that satisfies $w>0$ almost everywhere.

\begin{definition}
Let $w$ be a positive weight and $\alpha \in \R$. We define the space $\BMO_w^\alpha(\R^d)$ to consist of the locally integrable functions $b$ that satisfy 
\begin{align*}
\sup_{Q} \frac{1}{w(Q)^{1+\alpha/d}}\int_Q\abs{b-\ave{b}_Q} \dif x <\infty,
\end{align*}
where the symbol $Q$ denotes any cube in $\R^d$.
\end{definition}

\begin{definition}
Let $w$ be a positive weight and $\alpha \in \R$. We define the space $\VMO_w^\alpha(\R^d)$ to consist of the locally integrable functions $b$ that satisfy 
\begin{align*}
\lim_{s\to 0}\sup_{Q\colon l(Q)\leq s} \frac{1}{w(Q)^{1+\alpha/d}}\int_Q\abs{b-\ave{b}_Q} \dif x = 0,
\end{align*}
\begin{align*}
\lim_{s\to \infty}\sup_{Q\colon l(Q)\geq s} \frac{1}{w(Q)^{1+\alpha/d}}\int_Q\abs{b-\ave{b}_Q} \dif x = 0,
\end{align*}
\begin{align*}
\lim_{s\to \infty}\sup_{Q\colon \mathrm{dist}(Q,0)\geq s} \frac{1}{w(Q)^{1+\alpha/d}}\int_Q\abs{b-\ave{b}_Q} \dif x = 0,
\end{align*}
where the symbol $Q$ denotes any cube in $\R^d$.
\end{definition}

Often in the definition of a $\VMO$ function, one assumes a priori that the function is a $\BMO$ function. We show that in our setting, even if one assumes only local integrability, it will a posteriori hold that the function is a $\BMO$ function.

\begin{lemma}\label{lemma:vmoinsidebmo}
     $\VMO_w^\alpha(\R^d) \subset \BMO_w^\alpha(\R^d)$ for all positive weights $w$ and $\alpha \in \R$.
\end{lemma}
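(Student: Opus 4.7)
The plan is to split $\|b\|_{\BMO_w^\alpha}$ into four regimes determined by the cube $Q$: three of them are immediately under control from the hypothesis that $b \in \VMO_w^\alpha(\R^d)$, and the remaining ``middle'' regime will be handled by a compactness argument. First, using the three vanishing limits, I will fix thresholds $0 < s_1 \leq s_2 < \infty$ and $s_3 > 0$ such that the quantity $\frac{1}{w(Q)^{1+\alpha/d}}\int_Q |b - \ave{b}_Q|\,\dif x$ is bounded by, say, $1$ whenever the cube $Q$ satisfies $\ell(Q) \leq s_1$, or $\ell(Q) \geq s_2$, or $\mathrm{dist}(Q,0) \geq s_3$. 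The leftover family $\mathcal{F}$ consists of cubes with $s_1 \leq \ell(Q) \leq s_2$ and $\mathrm{dist}(Q,0) \leq s_3$, and every such cube is contained in the fixed ball $B(0,R)$ with $R := s_3 + \sqrt{d}\,s_2$.

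For $Q \in \mathcal{F}$, the numerator satisfies $\int_Q |b - \ave{b}_Q|\,\dif x \leq 2\int_{B(0,R)}|b|\,\dif x < \infty$ by local integrability of $b$, uniformly in $Q$. The whole difficulty lies in producing a uniform positive lower bound for the denominator $w(Q)^{1+\alpha/d}$ on $\mathcal{F}$. I will obtain this from a compactness argument: parametrise each $Q \in \mathcal{F}$ by its centre $c$ and its side length $\ell$, which ranges over a closed bounded, hence compact, subset of $\R^d \times [s_1, s_2]$. The map $(c, \ell) \mapsto w(Q(c, \ell))$ is continuous by dominated convergence, since $1_{Q(c_n, \ell_n)}$ converges to $1_{Q(c, \ell)}$ off the boundary of $Q(c,\ell)$ (which has Lebesgue, hence $w$-, measure zero) and is dominated by $1_{B(0,2R)}$, with $1_{B(0,2R)} w \in L^1(\R^d)$ by local integrability of $w$. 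Continuity on a compact set yields a cube $Q_0 \in \mathcal{F}$ attaining the infimum.

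The decisive step is to rule out $w(Q_0) = 0$. Since $\ell(Q_0) \geq s_1 > 0$ forces $|Q_0| \geq s_1^d > 0$, and since by the definition of a positive weight we have $w > 0$ almost everywhere, we conclude $w(Q_0) > 0$. Hence $\inf_{Q \in \mathcal{F}} w(Q) > 0$, giving the required uniform bound on $\mathcal{F}$, and together with the three bounds from the vanishing limits this proves $\|b\|_{\BMO_w^\alpha} < \infty$. The only substantive obstacle is the compactness/continuity argument for the denominator; the rest reduces to bookkeeping against the three limits in the definition of $\VMO_w^\alpha(\R^d)$.
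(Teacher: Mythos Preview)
Your proof is correct and follows the same overall strategy as the paper: both split the supremum into the three regimes handled directly by the $\VMO$ limits and a residual ``middle'' family of cubes with controlled side length and distance to the origin, bound the numerator uniformly using the local integrability of $b$, and then produce a uniform positive lower bound on $w(Q)$ for $Q$ in the middle family.

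The difference lies in how that lower bound on $w(Q)$ is obtained. The paper argues combinatorially: it fixes a large cube $Q_0$ containing all middle cubes, subdivides $Q_0$ into a finite dyadic grid $\{P_j\}$ whose cells are small enough that every middle cube $Q$ must contain at least one $P_j$, and then uses $w(Q)\geq \min_j w(P_j)>0$. You instead parametrise the middle cubes by $(c,\ell)$ in a compact subset of $\R^d\times[s_1,s_2]$, verify by dominated convergence that $(c,\ell)\mapsto w(Q(c,\ell))$ is continuous, and conclude that the infimum is attained and strictly positive because $w>0$ almost everywhere. Your route is a bit more abstract and relies on the absolute continuity of $w\,\dif x$ with respect to Lebesgue measure (so that $\partial Q$ has $w$-measure zero) to justify the continuity step; the paper's route is more elementary and avoids any limiting argument on the weight. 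Both are perfectly valid here.
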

\begin{proof}
For any cube $Q \subset \R^d$, we denote 
\[
\mathcal{O}_{w}^{\alpha}(b;Q) := \frac{1}{w(Q)^{1+\alpha/d}}\int_Q\abs{b-\ave{b}_Q} \dif x
\]

First choose a large $s>0$ so that if $l(Q)\leq 1/s$, $l(Q)\geq s$ or $\mathrm{dist}(Q,0)\geq s$, we have $\mathcal{O}_{w}^{\alpha}(b;Q)\leq 1$. Let us turn our attention to the remaining cubes $Q$ that satisfy $1/s<l(Q)<s$ and $\mathrm{dist}(Q,0)< s$. Because $l(Q)<s$ and $\mathrm{dist}(Q,0)< s$, all such cubes $Q$ are contained in some cube $Q_0$ centred at origin with side length $l(Q_0)$. Therefore 
\[
\int_{Q}|b-\ave{b}_Q|\leq 2\int_{Q}|b-\ave{b}_{Q_0}|\leq 2\int_{Q_0}|b-\ave{b}_{Q_0}|= 2w(Q_0)^{1+\alpha/d}\mathcal{O}_{w}^{\alpha}(b;Q_0)
\]
for the remaining cubes $Q$. Let us choose a positive integer $k=k(s)$ so that $2^{-k}l(Q_0)\leq\frac{1}{2s}$. Let $\{P_1,\ldots, P_{2^{kd}}\}$ consist of the $2^{kd}$ $k$th level dyadic subcubes $P_j$ of $Q_0$. We denote $m:=\min\{w(P_1),\ldots, w(P_{2^{kd}})\}>0$. Let $Q$ be one of the remaining cubes. Then there exists an index $j$ such that the centre point of $Q$ is in $P_j$. Since $l(Q)>1/s$ and $2^{-k}l(Q_0)\leq \frac{1}{2s}$, it holds that $P_j\subset Q$. Therefore $m\leq w(P_j)\leq w(Q)$ and 
\[
\frac{1}{w(Q)^{1+\alpha/d}}\leq\max\big\{\frac{1}{m^{1+\alpha/d}},\frac{1}{w(Q_0)^{1+\alpha/d}}\big\}. 
\]
Note that whether or not $1+\alpha/d\geq0$ affects the proof of the above estimate and combining it with the earlier estimate, we get 
\begin{align*}
\mathcal{O}_{w}^{\alpha}(b;Q)\leq 2\max\big\{\frac{w(Q_0)^{1+\alpha/d}}{m^{1+\alpha/d}},1\big\}\mathcal{O}_{w}^{\alpha}(b;Q_0).
\end{align*}
for the remaining cubes $Q$. Thus 
\[
\|b\|_{\mathrm{BMO}_{w}^{\alpha}(\R^d)}\leq \max\{1,2\max\big\{\frac{w(Q_0)^{1+\alpha/d}}{m^{1+\alpha/d}},1\big\}\mathcal{O}_{w}^{\alpha}(b;Q_0)\}<\infty.
\]
\end{proof}

\begin{remark}
    For our purposes, we study the spaces $\BMO_w^\alpha(\R^d)$ and $\VMO_w^\alpha(\R^d)$ only for $\alpha \geq 0$.
\end{remark}

\begin{definition}[Muckenhoupt classes of weights]
    Let $1<u<\infty$ and let $w$ be a positive weight. Then we say that $w\in A_u$ if 
    \[
    \sup_Q\ave{w}_Q\ave{w^{1-u'}}_Q^{u-1} < \infty,
    \]
    where the symbol $Q$ denotes any cube in $\R^d$.
\end{definition}

\begin{definition}[Reverse H\"older classes of weights]
    Let $1<u<\infty$ and let $w$ be a positive weight. Then we say that $w\in RH_u$ if
    \[
    \sup_Q \ave{w^u}_Q^\frac{1}{u}\ave{w}_Q^{-1} <  \infty,
    \]
    where the symbol $Q$ denotes any cube in $\R^d$.
\end{definition}

\begin{definition}
    Suppose $\Omega \colon \Sp^{d-1} \to \CC$. We say that $\Omega \in \Lip(\Sp^{d-1})$, if there exists $M>0$ so that 
    \[
    |\Omega(x)-\Omega(y)| \leq M|x-y| \quad \forall x,y\in \Sp^{d-1}. 
    \]
\end{definition}

\begin{definition}\label{tomega}
    Let $1 < q \leq \infty$. Suppose that $\Omega \in L^q(\Sp^{d-1})$. Suppose also that the integral of $\Omega$ over $\Sp^{d-1}$ is zero, or in other words, that $\Omega$ has zero average. We formally define
    \[
    T_\Omega f(x) := \lim_{\eps \to 0} \int_{|y|>\eps} \frac{\Omega(y/|y|)}{|y|^d}f(x-y) \dif y.
    \]
    We call the kernel $K_\Omega$ defined by $K_\Omega(x,y):=\frac{\Omega((x-y)/|x-y|)}{|x-y|^d}$ a \emph{rough homogeneous} kernel.
\end{definition}

We record the well-known boundedness of $T_\Omega$. For the proof, see \cite[Theorem 4.12]{Duo2001}.

\begin{lemma}\label{lemma:lqomegameansbounded}
     Let $1<q\leq\infty$ and suppose that $\Omega \in L^q(\Sp^{d-1})$ has zero average over $\Sp^{d-1}$. Then the linear operator $T_\Omega$ defined by
     \[
     T_\Omega(f)=\lim_{\eps\to 0} \int_{|y|>\eps} \frac{\Omega(y/|y|)}{|y|^d}f(\cdot-y) \dif y
     \]
    is bounded on $L^p(\R^d)$ for every $1<p<\infty$.
\end{lemma}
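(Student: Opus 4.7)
The plan is to use the classical Calder\'on-Zygmund method of rotations, reducing the $L^p$-boundedness of $T_\Omega$ to the uniform $L^p$-boundedness of the one-dimensional Hilbert transform. First I would decompose $\Omega = \Omega_o + \Omega_e$ into its odd and even parts under the antipodal map on $\Sp^{d-1}$. Both parts lie in $L^q(\Sp^{d-1})$ with norms controlled by $\|\Omega\|_{L^q}$; the odd part integrates to zero automatically, and the zero-average hypothesis transfers the same property to the even part. Thus it suffices to treat the two pieces separately.

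For the odd piece, polar coordinates combined with the oddness of $\Omega_o$ give
\[
T_{\Omega_o} f(x) = \frac{1}{2}\int_{\Sp^{d-1}} \Omega_o(\theta)\, H_\theta f(x)\, d\sigma(\theta),
\]
where $H_\theta f(x) = \text{p.v.}\int_\R f(x - t\theta)\, t^{-1}\, dt$ denotes the directional Hilbert transform. By Fubini and the $L^p$-boundedness of the one-dimensional Hilbert transform applied fiberwise along the direction $\theta$, one has $\|H_\theta f\|_{L^p(\R^d)} \lesssim_p \|f\|_{L^p(\R^d)}$ uniformly in $\theta$, and Minkowski's integral inequality then gives $\|T_{\Omega_o} f\|_{L^p} \lesssim_p \|\Omega_o\|_{L^1}\|f\|_{L^p} \lesssim_{p,q} \|\Omega\|_{L^q}\|f\|_{L^p}$. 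Note that this step needs only $\Omega \in L^1(\Sp^{d-1})$.

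The main obstacle is the even piece, where the analogous rotation argument produces a logarithmic kernel that is not $L^p$-bounded. The Calder\'on-Zygmund remedy is to factor $T_{\Omega_e} = \sum_{j=1}^d R_j T_{\widetilde{\Omega}_j}$, where the $R_j$ are the Riesz transforms and each $\widetilde{\Omega}_j$ is an odd function on $\Sp^{d-1}$ with zero mean and $\|\widetilde{\Omega}_j\|_{L^q}\lesssim \|\Omega\|_{L^q}$. Once this representation is established, the $L^p$-boundedness of the Riesz transforms combined with the already-handled odd case delivers the even case. The hypothesis $q>1$ is exactly what guarantees the $L^q$-controlled construction of the auxiliary odd kernels $\widetilde{\Omega}_j$ (and the convergence of the defining limits); the complete argument is carried out in \cite[Theorem 4.12]{Duo2001}.
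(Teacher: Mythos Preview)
Your proposal is correct and aligns with the paper's treatment: the paper does not prove this lemma but simply refers to \cite[Theorem 4.12]{Duo2001}, which is exactly the reference you cite and whose method-of-rotations argument you have accurately sketched. Your exposition of the odd/even decomposition and the Riesz-transform reduction for the even part is a faithful summary of that proof.
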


We will consider a kernel that is connected to the following Orlicz-Lorentz type space.

\begin{definition}\label{definition:lorentzorlich}
    Let $1< q< \infty$. Following \cite{Lau2024}, we define
    \[
        \|\Omega\|_{L^{q,1}\log L(\Sp ^{d-1})} \coloneqq \inf\{\lambda>0\,\colon\,[\Omega/\lambda]_{L^{q,1}\log L(\Sp ^{d-1})}\leq1\},
    \]
    where
    \[
    [\Omega]_{L^{q,1}\log L(\Sp ^{d-1})} \coloneqq q\int_0^\infty \log(e+t)|\{\theta\in \Sp^{d-1} : |\Omega(\theta)|>t\}|^\frac{1}{q} \dif t.
    \]
    We say that  $\Omega\in L^{q,1}\log L(\Sp^{d-1})$, if $\|\Omega\|_{L^{q,1}\log L(\Sp ^{d-1})}<\infty$.
\end{definition}

One key property of $L^{q,1}\log L(\Sp^{d-1})$ is that it is between $L^q(\Sp^{d-1})$ and $L^{q+\eps}(\Sp^{d-1})$ for any $\eps>0$. This might be well-known to experts, but we could not find a proof and record it here for completeness. The following definition will help with the proof of this fact. 
\begin{definition}
    The decreasing rearrangement $f^*$ of a measurable function $f$ is defined by
    \[
        f^*(t)=\inf\{\tau>0\,\colon\,|\{|f|>\tau\}|\leq t\}.
    \]
\end{definition}
We also find the following special case of Lemma 6.1 from \cite{BR1980} very helpful.
\begin{lemma}\label{lemma:decfuncineq}
    Suppose $0<a,b<\infty$. Let $\phi$ be a non-negative decreasing function on $(0,\infty)$. Then for each $t\in(0,\infty)$, we have
    \[
        \sup_{0<s\leq t}s^b\phi(s)\leq \left(ab\int_0^t \left[s^b\phi(s)\right]^a \frac{\dif s}{s}\right)^\frac{1}{a}.
    \]
\end{lemma}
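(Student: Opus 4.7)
The plan is to exploit the monotonicity of $\phi$ to produce a pointwise lower bound on the integral on the right-hand side, which when rearranged yields exactly the required inequality.

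First I would fix an arbitrary $\sigma \in (0,t]$ and try to bound $\sigma^b \phi(\sigma)$ from above by the integral over $(0,\sigma]$. Since $\phi$ is decreasing, for every $s \in (0,\sigma]$ we have $\phi(s) \geq \phi(\sigma)$, and consequently
\[
\int_0^\sigma \bigl[s^b \phi(s)\bigr]^a \,\frac{\dif s}{s} \;\geq\; \phi(\sigma)^a \int_0^\sigma s^{ab}\,\frac{\dif s}{s} \;=\; \frac{\bigl[\sigma^b \phi(\sigma)\bigr]^a}{ab},
\]
where the last equality uses $a,b>0$ so that the integral $\int_0^\sigma s^{ab-1}\,\dif s$ converges and equals $\sigma^{ab}/(ab)$.

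Rearranging this inequality gives $\sigma^b\phi(\sigma) \leq \bigl(ab \int_0^\sigma [s^b\phi(s)]^a \frac{\dif s}{s}\bigr)^{1/a}$. Since $\sigma \leq t$, the integral is further bounded by the one over $(0,t]$, and I would then take the supremum over $\sigma \in (0,t]$ on the left to conclude.

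I do not expect any real obstacle here: the only subtlety is noting that the positivity of $a$ and $b$ is exactly what makes $\int_0^\sigma s^{ab-1}\,\dif s$ finite and equal to $\sigma^{ab}/(ab)$, producing the constant $ab$ on the right-hand side.
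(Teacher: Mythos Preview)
Your argument is correct and is exactly the standard proof of this inequality; the paper itself does not supply a proof but merely cites \cite{BR1980}, whose argument is essentially the one you wrote.
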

Note that in \cite{BR1980} the above result was formulated only for non-negative decreasing functions $\phi$ on $(0,1)$ and $t\in(0,1)$, but an inspection of the proof shows that the above is also true. 
\begin{lemma}\label{lemma:lorentzorlicztolebesgue}
Suppose that $1<q<\infty$. Then for any $\eps>0$ we have \[L^{q+\eps}(\Sp^{d-1})\subset L^{q,1}\log L(\Sp^{d-1})\subset L^{q}(\Sp^{d-1}).\] Furthermore, there is a number $C>0$ that depends only on $d,q$ and $\varepsilon$ so that 
\[
\|f\|_{L^q(\Sp^{d-1})}\leq\|f\|_{L^{q,1}\log L(\Sp^{d-1})} \leq C  
\|f\|_{L^{q+\eps}(\Sp^{d-1})}.
\]
\end{lemma}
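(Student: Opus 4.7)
The plan is to prove the two inclusions separately, working directly from the Luxemburg-type definition of $\|\cdot\|_{L^{q,1}\log L(\Sp^{d-1})}$.

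For the lower bound $\|f\|_{L^q(\Sp^{d-1})} \leq \|f\|_{L^{q,1}\log L(\Sp^{d-1})}$, I would first prove the auxiliary estimate
\[
\|f\|_{L^q(\Sp^{d-1})} \leq \int_0^\infty |\{|f|>t\}|^{1/q} \dif t
\]
by writing $|f(\theta)| = \int_0^\infty 1_{\{|f|>t\}}(\theta) \dif t$ and applying Minkowski's integral inequality. Then I pick any $\lambda > \|f\|_{L^{q,1}\log L(\Sp^{d-1})}$, so that $[f/\lambda]_{L^{q,1}\log L(\Sp^{d-1})} \leq 1$ by definition of the Luxemburg infimum. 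Since $\log(e+t) \geq 1$ for $t\geq 0$, this yields $\int_0^\infty |\{|f/\lambda|>t\}|^{1/q} \dif t \leq 1/q$, and combining with the Minkowski bound gives $\|f/\lambda\|_{L^q(\Sp^{d-1})} \leq 1/q \leq 1$; hence $\|f\|_{L^q(\Sp^{d-1})} \leq \lambda$. Letting $\lambda \downarrow \|f\|_{L^{q,1}\log L(\Sp^{d-1})}$ closes this direction.

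For the upper bound $\|f\|_{L^{q,1}\log L(\Sp^{d-1})} \leq C \|f\|_{L^{q+\eps}(\Sp^{d-1})}$, I would normalise $\|f\|_{L^{q+\eps}(\Sp^{d-1})} = 1$ by homogeneity. Chebyshev's inequality (equivalently, Lemma~\ref{lemma:decfuncineq} with $\phi = f^*$, $a = q+\eps$, $b = 1/(q+\eps)$) gives the pointwise bound
\[
|\{|f|>t\}| \leq \min\bigl(|\Sp^{d-1}|,\,t^{-(q+\eps)}\bigr), \qquad t > 0.
\]
For $\lambda \geq 1$, the change of variables $s = \lambda t$ together with the trivial bound $\log(e+s/\lambda) \leq \log(e+s)$ gives
\[
[f/\lambda]_{L^{q,1}\log L(\Sp^{d-1})} = q\int_0^\infty \log(e+t)|\{|f|>\lambda t\}|^{1/q}\dif t \leq \frac{q}{\lambda}\int_0^\infty \log(e+s)\min\bigl(|\Sp^{d-1}|^{1/q},\,s^{-(q+\eps)/q}\bigr)\dif s.
\]
The rightmost integral is a finite constant $C_0 = C_0(d,q,\eps)$: near $s=0$ the integrand is bounded, while the tail $\log(e+s)\,s^{-(q+\eps)/q}$ is integrable since $(q+\eps)/q > 1$ beats the logarithmic growth. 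Choosing $\lambda := \max(1,qC_0)$ forces $[f/\lambda]_{L^{q,1}\log L(\Sp^{d-1})} \leq 1$, so $\|f\|_{L^{q,1}\log L(\Sp^{d-1})} \leq \max(1,qC_0) =: C$, with $C$ depending only on $d$, $q$ and $\eps$.

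The main subtlety is the nonlinear scaling of the functional $[\,\cdot\,]_{L^{q,1}\log L}$: because $\log(e+t)$ is not multiplicatively homogeneous, one cannot pull $\lambda$ out as an overall factor of $[f/\lambda]$, and the rescaling must instead be carried out at the level of the distribution function, as in the change of variables $s = \lambda t$ above. Once that is arranged, the two control tools—Minkowski's inequality for the lower bound and Chebyshev's estimate (equivalently Lemma~\ref{lemma:decfuncineq}) for the upper bound—are sufficient to close both inclusions.
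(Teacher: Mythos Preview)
Your proof is correct. The lower bound argument is essentially the same as the paper's: both use that $\log(e+t)\geq 1$ so that $[f]_{L^{q,1}\log L}$ dominates the $L^{q,1}$ integral, and then pass from the Luxemburg infimum to $\|f\|_{L^q}$ by scaling. You argue from $\lambda>\|f\|_{L^{q,1}\log L}$ while the paper argues contrapositively from $\lambda<\|f\|_{L^q}$, but the content is identical.

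For the upper bound your route is genuinely simpler than the paper's. The paper rewrites $[f]_{L^{q,1}\log L}$ via Fubini as an integral in the decreasing rearrangement $f^*$, then invokes Lemma~\ref{lemma:decfuncineq} and the pointwise estimate $\log(e+x)-1\leq C_a x^a$ to reach an inequality of the form $[f]_{L^{q,1}\log L}\lesssim \|f\|_{L^{q+\eps}}+\|f\|_{L^{q+\eps}}^{1+\eps/(2q+\eps)}$, and only then performs the Luxemburg scaling. You bypass all of this: the single Chebyshev bound $|\{|f|>t\}|\leq \min(|\Sp^{d-1}|,t^{-(q+\eps)})$ plus the change of variables $s=\lambda t$ (using $\lambda\geq 1$ so that $\log(e+s/\lambda)\leq\log(e+s)$) immediately gives $[f/\lambda]_{L^{q,1}\log L}\leq qC_0/\lambda$ with $C_0=C_0(d,q,\eps)<\infty$. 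Your argument never needs the rearrangement $f^*$ or Lemma~\ref{lemma:decfuncineq}; the parenthetical remark that Chebyshev is ``equivalent'' to that lemma is true but unnecessary, since Chebyshev is the more elementary fact. The paper's approach does yield slightly more information (an explicit bound on $[f]$ itself rather than only on the Luxemburg norm), but for the stated lemma your direct method is cleaner.
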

\begin{proof}
    The first inequality is quite straightforward. We have 
    \[
        \|f\|_{L^q(\Sp^{d-1})}\leq \|f\|_{L^{q,1}(\Sp^{d-1})}\leq [f]_{L^{q,1}\log L(\Sp^{d-1})}.
    \]
    Scaling this inequality with $\lambda<\|f\|_{L^q(\Sp^{d-1})}$ yields
    \[
        [f/\lambda]_{L^{q,1}\log L(\Sp^{d-1})}\geq \frac{\|f\|_{L^q(\Sp^{d-1})}}{\lambda}>1,
    \]
    which by Definition \ref{definition:lorentzorlich} implies $\lambda<\|f\|_{L^{q,1}\log L(\Sp^{d-1})}$. Letting $\lambda\to \|f\|_{L^q(\Sp^{d-1})}$ we get the first inequality
    \[
        \|f\|_{L^q(\Sp^{d-1})}\leq \|f\|_{L^{q,1}\log L(\Sp^{d-1})}.
    \]

    To prove the second inequality we will first estimate $[f]_{L^{q,1}\log L(\Sp^{d-1})}$.
    By calculating the integral of the function $\log(e+t) r^{\frac{1}{q}-1}$
    over the subset $\{|\{|f|>t\}|>r\}=\{f^*(r)>t\}$ of $(0,\infty)\times(0,\infty)$ in two different ways, we get that
    \begin{align*}
          [f]_{L^{q,1}\log L(\Sp^{d-1})}&= q\int_0^\infty \log(e+t)|\{\theta\in \Sp^{d-1} \colon |f(\theta)|>t\}|^\frac{1}{q} \dif t\\&= \int_0^\infty  t^{\frac{1}{q}}(e+f^*(t))(\log(e+f^*(t))-1) \frac{\dif t}{t}.
    \end{align*}
    Note that if $f^*(t)$ vanishes, then the whole integrand vanishes. In particular, this happens when $t\geq |\Sp^{d-1}|\eqqcolon C_d$.  Fix $\delta(\eps)\coloneqq\delta>0$ to be chosen later and denote $r\coloneqq q+\delta$. Then we estimate
    \begin{align*}
        \int_0^{C_d}  t^{\frac{1}{q}}&(e+f^*(t))(\log(e+f^*(t))-1) \frac{\dif t}{t}\\&\leq \sup_{0<t\leq C_d}t^\frac{1}{r}(e+f^*(t))(\log(e+f^*(t))-1)\int_0^{C_d} t^{\frac{1}{q}-\frac{1}{r}-1}\dif t.
    \end{align*}
    Since $r>q$ the integral satisfies
    \[
        \int_0^{C_d} t^{\frac{1}{q}-\frac{1}{r}-1}\dif t=C_d^{\frac{1}{q}-\frac{1}{r}}\frac{qr}{r-q}\eqsim_{d,q,\eps}1.
    \]
    An application of Lemma \ref{lemma:decfuncineq} yields that 
    \begin{align*}
        \sup_{0<t\leq C_d}t^\frac{1}{r}&(e+f^*(t))(\log(e+f^*(t))-1)\\&\leq \left(\int_0^{C_d}\left[t^\frac{1}{r}(e+f^*(t))(\log(e+f^*(t))-1)\right]^r\frac{\dif t}{t}\right)^\frac{1}{r}\\&=  \left(\int_0^{C_d}\left[(e+f^*(t))(\log(e+f^*(t))-1)\right]^r\dif t\right)^\frac{1}{r}.
    \end{align*}
    By Minkowski's inequality the last expression is bounded by 
    \[
        e\left(\int_0^{C_d}\left[\log(e+f^*(t))-1\right]^r\dif t\right)^\frac{1}{r}+\left(\int_0^{C_d}\left[f^*(t)(\log(e+f^*(t))-1)\right]^r\dif t\right)^\frac{1}{r}
    \]
    For any $0<a\leq1$ and $x\geq0$ there holds $\log(e+x)-1\leq \mathcal C_{a} x^{a}$, where 
    \[
        \mathcal C_{a}\coloneqq \sup_{x>0}\frac{\log(e+x)-1}{x^a}.
    \]
    This with $a=1$ and $x=f^*(t)$ lets us estimate 
    \[
        \left(\int_0^{C_d}\left[\log(e+f^*(t))-1\right]^r\dif t\right)^\frac{1}{r}\lesssim \|f\|_{L^{q+\delta}(\Sp^{d-1})}.
    \]
    For the second integral we use the same estimate with $a=\frac{\delta}{r}$ and $x=f^*(t)$, which yields
    \begin{align*}
        \left(\int_0^{C_d}\left[f^*(t)(\log(e+f^*(t))-1)\right]^r\dif t\right)^\frac{1}{r}&\lesssim_{q,\eps} \left(\int_0^{C_d}\left[f^*(t)\right]^{r+\delta}\dif t\right)^\frac{1}{r}\\&=\|f\|_{L^{r+\delta}(\Sp^{d-1})}^{1+\frac{\delta}{r}}=\|f\|_{L^{q+2\delta}(\Sp^{d-1})}^{1+\frac{\delta}{q+\delta}}.
    \end{align*}
    Choosing $\delta=\frac{\eps}{2}$ and combining all the estimates we have now shown that there exists a constant $C\coloneqq C(d,q,\eps)$ such that
    \[
        [f]_{L^{q,1}\log L(\Sp^{d-1})}\leq C\left(\|f\|_{L^{q+\eps}(\Sp^{d-1})}+\|f\|_{L^{q+\eps}(\Sp^{d-1})}^{1+\frac{\eps}{2q+\eps}}\right).
    \]
    We scale this inequality with $\lambda=2C\|f\|_{L^{q+\eps}(\Sp^{d-1})}$ to get
    \begin{align*}
        [f/\lambda]_{L^{q,1}\log L(\Sp^{d-1})}&\leq C\left(\left\|f/\lambda\right\|_{L^{q+\eps}(\Sp^{d-1})}+\left\|f/\lambda\right\|_{L^{q+\eps}(\Sp^{d-1})}^{1+\frac{\eps}{2q+\eps}}\right)\\&=C\left(\frac{1}{2C}+\frac{1}{(2C)^{1+\frac{\eps}{2q+\eps}}}\right)\leq 1.
    \end{align*}
    By Definition \ref{definition:lorentzorlich} the above implies 
    \[
        \|f\|_{L^{q,1}\log L(\Sp^{d-1})}\leq \lambda = 2C \|f\|_{L^{q+\eps}(\Sp^{d-1})}.
    \]
    This concludes the proof of the lemma.
\end{proof}

\section{Sufficiency for Bloom-type compactness}\label{sec:sufficiencyforcompactness}

In this section we will prove the main result, that is, the Bloom-weighted compactness of commutators of $T_\Omega$ with $\Omega\in L^q(\Sp^{d-1})$.
The first step towards this result is the following Theorem \ref{theorem:bloomBddforLOkernels}, which says that Bloom-type boundedness for the commutator of rough singular integrals holds with $\Omega\in L^{q,1}\log L(\Sp^{d-1})$. As a corollary, we obtain that Bloom-type boundedness holds also with $\Omega\in L^q(\Sp^{d-1})$. The proof will use Lemma \ref{lemma:lorentzorlicztolebesgue} and the self-improvement properties of the reverse H\"older classes.

\begin{theorem}[{\cite[Corollary 7.2]{Lau2024}}]\label{theorem:bloomBddforLOkernels}
    Suppose that $\Omega\in L^{q,1}\log L(\Sp^{d-1})$ for some $1<q<\infty$ with zero average over $\Sp^{d-1}$. Let also $1<u\leq v<q$, $\mu\in A_u\cap RH_{\left(\frac{q}{u}\right)'}$ and $\lambda\in A_v\cap RH_{\left(\frac{q}{v}\right)'}$, and define the Bloom weight $\nu^{1+\frac{\alpha}{d}}\coloneqq \mu^\frac{1}{u}\lambda^{-\frac{1}{v}}$, where $\alpha\coloneqq d\left(\frac{1}{u}-\frac{1}{v}\right)$. Then we have
    \begin{align*}
        \|[b,T_\Omega]\|_{L^u(\mu)\to L^v(\lambda)}\lesssim_{d,q,u,v,\mu,\lambda} \|\Omega\|_{L^{q,1}\log L(\Sp^{d-1})}\, 
        \|b\|_{\BMO^{\alpha}_\nu}.
    \end{align*}
\end{theorem}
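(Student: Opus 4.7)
The plan is simply to invoke this result as stated: it is Corollary 7.2 of \cite{Lau2024}, established there under exactly these hypotheses. In that sense the proof in the present paper reduces to a one-line citation, and I would not try to reprove it from scratch. Below I only sketch the underlying strategy behind that corollary, which is what one would have to carry out if the citation were not available.

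The starting point, and the main technical contribution of \cite{Lau2024}, is a convex body sparse domination for $T_\Omega$ at scale $q$: for $\Omega \in L^{q,1}\log L(\Sp^{d-1})$ with zero mean, one produces a sparse collection $\mathcal{S}$ (depending on $f$) for which $T_\Omega f$ is dominated, in the vector-valued convex body sense, by the sparse operator built from the $L^q$-averages $\ave{|f|^q}_Q^{1/q}$, with operator constant proportional to $\|\Omega\|_{L^{q,1}\log L(\Sp^{d-1})}$. The logarithmic weight in the Lorentz--Orlicz norm is exactly what is needed to sum the errors produced by decomposing $\Omega$ along its level sets in a Calder\'on--Zygmund type argument.

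From convex body sparse domination one passes to a pointwise sparse domination for the commutator via the transference result of \cite{Lau2023}: for any locally integrable $b$ one obtains an estimate of the schematic form
\[
|[b,T_\Omega]f(x)| \lesssim \|\Omega\|_{L^{q,1}\log L(\Sp^{d-1})} \sum_{Q\in \mathcal{S}} \Bigl(|b(x)-\ave{b}_Q|\,\ave{|f|^q}_Q^{1/q} + \ave{|b-\ave{b}_Q|^q|f|^q}_Q^{1/q}\Bigr) 1_Q(x).
\]

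The final step is a two-weight Bloom estimate for this sparse commutator form. The reverse H\"older assumptions $\mu \in RH_{(q/u)'}$ and $\lambda \in RH_{(q/v)'}$ are precisely what is needed to absorb the $L^q$-averages back into ordinary averages against $\mu$ and $\lambda$, after which a standard Bloom-type sparse argument, exploiting $\nu^{1+\alpha/d} = \mu^{1/u}\lambda^{-1/v}$ and duality with $L^{v'}(\lambda^{1-v'})$, yields $\|b\|_{\BMO^\alpha_\nu}$ on the right-hand side. The main obstacle in this chain is the first step: convex body sparse domination at the $L^{q,1}\log L$ threshold for $\Omega$ is delicate, and is precisely the novelty of \cite{Lau2024}; the remaining two steps follow established sparse-domination schemes once that input is available.
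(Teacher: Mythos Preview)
Your proposal is correct and matches the paper's treatment exactly: the paper does not prove this theorem at all but simply quotes it as \cite[Corollary 7.2]{Lau2024}, using it as a black box input to the subsequent Corollary. Your sketch of the underlying mechanism (convex body sparse domination from \cite{Lau2024}, commutator transference from \cite{Lau2023}, then a Bloom-type sparse estimate absorbing the $L^q$-averages via the reverse H\"older hypotheses) is an accurate summary of how the cited result is obtained, but none of that appears in the present paper.
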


\begin{corollary}\label{corollary:bloomBddforLqkernels}
    Theorem \ref{theorem:bloomBddforLOkernels} holds with $L^q(\Sp^{d-1})$ in place of $L^{q,1}\log L(\Sp^{d-1})$. The conclusion is then naturally replaced by 
    \begin{align*}
        \|[b,T_\Omega]\|_{L^u(\mu)\to L^v(\lambda)}\lesssim_{d,q,u,v,\mu,\lambda} \|\Omega\|_{L^q(\Sp^{d-1})}\, 
        \|b\|_{\BMO^{\alpha}_\nu}.
    \end{align*}
\end{corollary}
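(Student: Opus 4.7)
The plan is to apply Theorem \ref{theorem:bloomBddforLOkernels} with a slightly smaller integrability parameter $q'<q$, after using the self-improvement of the reverse Hölder classes to produce a little slack in the weight assumptions, and then convert $\|\Omega\|_{L^{q',1}\log L(\Sp^{d-1})}$ into $\|\Omega\|_{L^q(\Sp^{d-1})}$ via Lemma \ref{lemma:lorentzorlicztolebesgue}.

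Concretely, the hypothesis $\mu\in A_u\cap RH_{(q/u)'}$ implies, by the standard self-improvement property of reverse Hölder classes (Gehring's lemma), that there exists $\delta_1=\delta_1(\mu)>0$ with $\mu\in RH_{(q/u)'+\delta_1}$. Similarly, there exists $\delta_2=\delta_2(\lambda)>0$ with $\lambda\in RH_{(q/v)'+\delta_2}$. I would then choose $q'\in(v,q)$ close enough to $q$ so that simultaneously
\[
(q'/u)'=\tfrac{q'}{q'-u}\leq(q/u)'+\delta_1\qquad\text{and}\qquad (q'/v)'=\tfrac{q'}{q'-v}\leq(q/v)'+\delta_2,
\]
which is possible because $(q'/u)'$ and $(q'/v)'$ are continuous in $q'$ and approach $(q/u)'$ and $(q/v)'$ as $q'\to q^-$. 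For such $q'$ we then have $\mu\in A_u\cap RH_{(q'/u)'}$ and $\lambda\in A_v\cap RH_{(q'/v)'}$, and also $1<u\leq v<q'$, so the hypotheses of Theorem \ref{theorem:bloomBddforLOkernels} are satisfied with $q'$ in place of $q$.

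Next, I would invoke Lemma \ref{lemma:lorentzorlicztolebesgue} with exponents $q'$ and $\eps=q-q'>0$ to obtain
\[
\|\Omega\|_{L^{q',1}\log L(\Sp^{d-1})}\leq C(d,q,q')\,\|\Omega\|_{L^{q}(\Sp^{d-1})},
\]
so in particular $\Omega\in L^{q',1}\log L(\Sp^{d-1})$, and note that the zero-average condition is preserved. Combining the bound from Theorem \ref{theorem:bloomBddforLOkernels} applied to the parameter $q'$ with the above inequality yields
\[
\|[b,T_\Omega]\|_{L^u(\mu)\to L^v(\lambda)}\lesssim_{d,q',u,v,\mu,\lambda}\|\Omega\|_{L^{q',1}\log L(\Sp^{d-1})}\|b\|_{\BMO^\alpha_\nu}\lesssim\|\Omega\|_{L^q(\Sp^{d-1})}\|b\|_{\BMO^\alpha_\nu},
\]
where the implicit constant absorbs the dependence on $q'$ (which itself depends only on $d,q,\mu,\lambda$).

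The only subtle step is the selection of $q'$: one must verify that the self-improvement gains $\delta_1,\delta_2$ are large enough to allow the two reverse Hölder inclusions simultaneously, but this is immediate since $(q'/u)'\to(q/u)'$ and $(q'/v)'\to(q/v)'$ as $q'\to q^-$, while $\delta_1,\delta_2>0$ are fixed. No new calculation with the commutator itself is required — the entire proof is a soft argument that reduces the $L^q$ case to the previously established $L^{q,1}\log L$ case.
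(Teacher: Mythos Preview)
Your proof is correct and follows essentially the same approach as the paper: both use Gehring's lemma to self-improve the reverse H\"older exponents, then apply Theorem~\ref{theorem:bloomBddforLOkernels} with a slightly smaller integrability parameter and finish via Lemma~\ref{lemma:lorentzorlicztolebesgue}. The only differences are cosmetic---the paper writes the smaller parameter as $q-\delta$ and computes an explicit admissible $\delta$, whereas you use a continuity argument; as a minor stylistic point, the symbol $q'$ clashes with the paper's convention for conjugate exponents, so a name like $\tilde q$ would be cleaner.
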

\begin{proof}
     We will begin by showing that for any $\mu\in RH_{(\frac{q}{u})'}$ and any $\lambda \in RH_{(\frac{q}{v})'}$ there exists a $\delta\in(0,q-v)$ such that $\mu\in RH_{(\frac{q-\delta}{u})'}$ and $\lambda \in RH_{(\frac{q-\delta}{v})'}$.

    We choose $\delta_\lambda=\min\{\frac{q-v}{2},\frac{(q-v)^2}{2v}\eta\}$, where $\eta$ is the number in Gehring's Lemma (See \cite[Lemma 3]{Geh1973}) for $\lambda$. A simple calculation shows that 
    \[
        \left(\frac{q-\delta_\lambda}{v}\right)'
        =\left(\frac{q}{v}\right)'+\delta_\lambda\frac{v}{(q-v-\delta_\lambda)(q-v)}\leq \left(\frac{q}{v}\right)'+\delta_\lambda\frac{2v}{(q-v)^2}\leq \left(\frac{q}{v}\right)'+\eta ,
    \]
    and hence by Gehring's Lemma  we have $\lambda\in RH_{(\frac{q-\delta_\lambda}{v})'}$.
    A similar argument for $\mu$ with some $\delta_\mu\in(0,q-u)$ gives $\mu\in RH_{(\frac{q-\delta_\mu}{u})'}$. Thus 
    $\delta=\min\{\delta_\mu,\delta_\lambda\}$ has all the wanted properties.

    We note also that $\delta$ depends only on $d,q,u,v,\mu$ and $\lambda$, and it satisfies $q-\delta>v$. Thus Theorem \ref{theorem:bloomBddforLOkernels} and Lemma \ref{lemma:lorentzorlicztolebesgue} yields that 
    \begin{align*}
        \|[b,T_\Omega]\|_{L^{u}(\mu)\to L^{v}(\lambda)}&\lesssim_{d,q,u,v,\mu,\lambda} \|\Omega\|_{L^{q-\delta,1}\log L(\Sp^{d-1})}\, 
        \|b\|_{\BMO^{\alpha}_\nu}\\&\lesssim_{d,q,u,v,\mu,\lambda} \|\Omega\|_{L^{q}(\Sp^{d-1})}\, 
        \|b\|_{\BMO^{\alpha}_\nu},
    \end{align*}
    which concludes the proof.
    
\end{proof}

We aim to use a result of \cite{HOS2023} for Calder\'{o}n-Zygmund operators as a black box. In order to do this, we note that if $\Omega$ is smooth enough, it actually induces a standard kernel $K_\Omega$ of a Calder\'{o}n-Zygmund operator. This is recorded as the following lemma. We believe that its proof is definitely known. However, we could not find a direct reference. For the interested reader, we wrote the rest of the details of its proof in Appendix \ref{appendixa}.

\begin{lemma}\label{lemma:smoothomegagivesczo}
Suppose that $\Omega \in \Lip(\Sp^{d-1})$ has zero average over $\Sp^{d-1}$. Then $T_\Omega$ is a Calder\'{o}n-Zygmund operator in the sense of \cite[Definition 1.2]{HOS2023}.
\end{lemma}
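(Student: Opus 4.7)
The plan is to unpack the Calder\'on-Zygmund operator definition from \cite[Definition 1.2]{HOS2023} and verify each ingredient for $T_\Omega$ with $\Omega \in \Lip(\Sp^{d-1})$ of zero mean. Such an operator is required to be bounded on $L^2(\R^d)$, to admit an off-diagonal integral kernel representation, and to have that kernel satisfy both a standard size bound and a standard smoothness estimate (H\"older or Dini) in each variable.

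The $L^2$-boundedness and the kernel representation are the easy parts. Since $\Lip(\Sp^{d-1}) \subset L^\infty(\Sp^{d-1}) \subset L^q(\Sp^{d-1})$ for every $1 < q \leq \infty$, Lemma \ref{lemma:lqomegameansbounded} immediately gives boundedness of $T_\Omega$ on $L^2(\R^d)$. For the kernel representation, if $f$ is bounded and compactly supported and $x \notin \supp f$, then for $\eps < \mathrm{dist}(x,\supp f)$ the truncation in the definition of $T_\Omega$ is inactive on $\supp f$, so that $T_\Omega f(x) = \int_{\R^d} K_\Omega(x,y) f(y) \dif y$. The integral is absolutely convergent by the obvious size bound $|K_\Omega(x,y)| \leq \|\Omega\|_{L^\infty(\Sp^{d-1})} |x-y|^{-d}$, which simultaneously supplies the required kernel size estimate.

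The main content is the smoothness estimate. Fix $x,x',y\in\R^d$ with $|x-x'| \leq \tfrac{1}{2}|x-y|$, so that $|x'-y| \eqsim |x-y|$. Writing $\theta = (x-y)/|x-y|$ and $\theta' = (x'-y)/|x'-y|$, I would split
\begin{align*}
K_\Omega(x,y) - K_\Omega(x',y) = \Omega(\theta)\left( \frac{1}{|x-y|^d} - \frac{1}{|x'-y|^d} \right) + \frac{\Omega(\theta) - \Omega(\theta')}{|x'-y|^d}.
\end{align*}
The first term is controlled by $\|\Omega\|_{L^\infty}|x-x'| |x-y|^{-d-1}$ via a mean value estimate on $r \mapsto r^{-d}$. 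For the second, the elementary geometric bound $|\theta - \theta'| \lesssim |x-x'|/|x-y|$ together with the Lipschitz property of $\Omega$ gives at most $M|x-x'| |x-y|^{-d-1}$, where $M$ is the Lipschitz constant. Altogether this yields the Lipschitz kernel estimate $|K_\Omega(x,y) - K_\Omega(x',y)| \lesssim_\Omega |x-x'|/|x-y|^{d+1}$, which is stronger than any H\"older or Dini modulus. The estimate in the second variable is obtained by running the same decomposition with the roles of $x$ and $y$ exchanged.

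The main obstacle I expect is bureaucratic rather than conceptual: matching the precise smoothness modulus demanded in \cite[Definition 1.2]{HOS2023} with the Lipschitz-type bound just obtained, and verifying that no additional technical clause in that definition (weak boundedness, measurability conventions, or normalisation of kernel representation) goes beyond what the above argument provides. Assuming the definition requires a Dini- or H\"older-continuous modulus, the Lipschitz bound trivially suffices and the verification concludes.
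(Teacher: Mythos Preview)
Your proposal is correct and follows essentially the same approach as the paper: $L^2$-boundedness via Lemma \ref{lemma:lqomegameansbounded}, the off-support representation by removing the truncation, the size bound from $\|\Omega\|_{L^\infty}$, and the smoothness estimate by splitting into a radial-difference term (mean value theorem) and an $\Omega$-difference term (the geometric bound $|\theta-\theta'|\lesssim |x-x'|/|x-y|$, which is precisely the paper's Lemma \ref{lemma:howlipschitzspreadstowholespace}). The only cosmetic difference is that the paper applies the mean value theorem to $z\mapsto z^d$ in the numerator rather than to $r\mapsto r^{-d}$, and packages the resulting Lipschitz bound as a Dini modulus $\omega(t)\eqsim t$; your Lipschitz-type estimate is exactly this.
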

\begin{proof}
    Denote $K_\Omega(x,y):=\Omega((x-y)/|x-y|)|x-y|^{-d}$ for all $x,y\in \R^d$ such that $x\neq y$. Suppose then that $f$ is a Schwartz function and that $x$ is outside the support of $f$. When $\eps$ is less than the distance $d$ between $x$ and the support of $f$, we have 
    \[
    \int_{|y|>\eps} \frac{\Omega(y/|y|)}{|y|^d}f(x-y) \dif y = \int_{|y-x|>\eps} K_\Omega (x,y)f(y) \dif y = \int_{\R^d} K_\Omega(x,y) f(y) \dif y.
    \]
    This guarantees the desired off-support representation for $T_\Omega$ with the kernel $K_\Omega$. Moreover, by Lemma \ref{lemma:lqomegameansbounded}, the associated singular integral operator $T_\Omega$ is bounded on $L^2$. The rest of the required properties are checked in Appendix \ref{appendixa}.
\end{proof}

\begin{lemma}\label{lemma:ifsmoothusehosandvmoforcompactness}
    Suppose that $\Omega \in \Lip(\Sp^{d-1})$ has zero average over $\Sp^{d-1}$. Let also $1<u\leq v<\infty$, $\mu \in A_u$ and $\lambda \in A_v$. Define the Bloom weight $\nu$ by $\nu^{1+\alpha/d} := \mu^{1/u}\lambda^{-1/v}$, where $\alpha := d(\frac{1}{u}-\frac{1}{v})$.

    If $b\in \VMO^\alpha_\nu(\R^d)$, then the commutator $[b,T_\Omega]\colon L^u(\mu)\to L^v(\lambda)$ is compact.
\end{lemma}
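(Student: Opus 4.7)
The strategy is to reduce directly to the sufficiency direction of the compactness characterization for commutators of Calder\'on--Zygmund operators proved in \cite{HOS2023}, which the paper is using as a black box. The proof will have essentially two ingredients and no real calculation of its own.

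First, I would invoke Lemma \ref{lemma:smoothomegagivesczo}: since $\Omega \in \Lip(\Sp^{d-1})$ has zero average over $\Sp^{d-1}$, the operator $T_\Omega$ is a Calder\'on--Zygmund operator in the sense of \cite[Definition 1.2]{HOS2023}. In particular, the kernel $K_\Omega(x,y) = \Omega((x-y)/|x-y|)|x-y|^{-d}$ satisfies the standard size and H\"older regularity estimates (the regularity coming from the Lipschitz continuity of $\Omega$ on the sphere), and $T_\Omega$ is $L^2$-bounded by Lemma \ref{lemma:lqomegameansbounded}.

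Second, with $T_\Omega$ now fitting the CZ framework of \cite{HOS2023}, I would apply the sufficiency direction of the Bloom-type compactness characterization of that paper to the present data $(T_\Omega,b,u,v,\mu,\lambda,\alpha,\nu)$. The hypotheses $1<u\leq v<\infty$, $\mu\in A_u$, $\lambda\in A_v$, the definition of $\alpha$ via $\alpha/d=1/u-1/v$, and of the Bloom weight via $\nu^{1+\alpha/d}=\mu^{1/u}\lambda^{-1/v}$, together with $b\in \VMO^\alpha_\nu(\R^d)$, are precisely the assumptions in \cite{HOS2023}. The conclusion is exactly that $[b,T_\Omega]\colon L^u(\mu)\to L^v(\lambda)$ is compact.

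There is essentially no obstacle here; the genuine work has already been externalized to Lemma \ref{lemma:smoothomegagivesczo} (where the standard-kernel estimates for Lipschitz $\Omega$ are verified) and to the theorem of \cite{HOS2023}. The only point worth mentioning for clarity is that the three vanishing conditions defining $\VMO^\alpha_\nu(\R^d)$ here -- small cubes, large cubes, and cubes far from the origin -- coincide with those used in \cite{HOS2023}, so the hypothesis $b \in \VMO^\alpha_\nu(\R^d)$ transfers directly. This lemma is stated separately because it will serve as the base case in the subsequent density argument (approximating a general $\Omega \in L^q(\Sp^{d-1})$ by Lipschitz functions on the sphere and controlling the error via Corollary \ref{corollary:bloomBddforLqkernels}).
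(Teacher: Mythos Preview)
Your proposal is correct and follows exactly the paper's own proof: invoke Lemma \ref{lemma:smoothomegagivesczo} to place $T_\Omega$ in the Calder\'on--Zygmund framework of \cite{HOS2023}, then apply the sufficiency direction of \cite[Theorem 2.4]{HOS2023}. The paper's only additional remark is that the non-degeneracy assumption in \cite{HOS2023} is not needed for this direction.
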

\begin{proof}
    By Lemma \ref{lemma:smoothomegagivesczo}, $T_\Omega$ is a Calder\'{o}n-Zygmund operator in the sense of \cite[Definition 1.2]{HOS2023}. Thus by \cite[Theorem 2.4]{HOS2023}, the proof is done. (One does not need the non-degeneracy assumption of \cite{HOS2023} when one applies this ``direction'' of the theorem.)
\end{proof}

The following Theorem combines Lemma \ref{lemma:ifsmoothusehosandvmoforcompactness} with the Bloom-type boundedness of Corollary \ref{corollary:bloomBddforLqkernels}. The point in applying Lemma \ref{lemma:ifsmoothusehosandvmoforcompactness} is that $\Lip(\Sp^{d-1})$ is dense in $L^{q}(\Sp^{d-1})$ (for a similar proof strategy using the density of Lipschitz functions, see \cite{HLTY2023}).

\begin{theorem}\label{theorem:compactnessofcommutatorsbloomsetting}
    Let $1<q<\infty$. Suppose that $\Omega \in L^{q}(\Sp^{d-1})$ has zero average over $\Sp^{d-1}$. Let also $1<u\leq v<q$, $\mu \in A_u \cap RH_{(\frac{q}{u})'}$ and $\lambda \in A_v \cap RH_{(\frac{q}{v})'}$. Define the Bloom weight $\nu$ by $\nu^{1+\alpha/d} := \mu^{1/u}\lambda^{-1/v}$, where $\alpha := d(\frac{1}{u}-\frac{1}{v})$.

    If $b\in \VMO^\alpha_\nu(\R^d)$, then the commutator $[b,T_\Omega]\colon L^u(\mu)\to L^v(\lambda)$ is compact.
\end{theorem}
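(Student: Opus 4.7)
The plan is to approximate the rough kernel by Lipschitz kernels and exploit the fact that compact operators form a closed subspace in the operator norm topology. Specifically, I would use that $\Lip(\Sp^{d-1})$ is dense in $L^q(\Sp^{d-1})$, so there exists a sequence $\Omega_j \in \Lip(\Sp^{d-1})$ with $\Omega_j \to \Omega$ in $L^q(\Sp^{d-1})$. Since $\Omega$ has zero average and $|\Sp^{d-1}| < \infty$, the averages $a_j \coloneqq \fint_{\Sp^{d-1}} \Omega_j$ tend to zero, so replacing $\Omega_j$ by $\Omega_j - a_j$ (still in $\Lip(\Sp^{d-1})$) we may assume each $\Omega_j$ has zero average over $\Sp^{d-1}$, without spoiling the convergence $\Omega_j \to \Omega$ in $L^q(\Sp^{d-1})$.

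Next, for each $j$, Lemma \ref{lemma:ifsmoothusehosandvmoforcompactness} applies, since $\Omega_j \in \Lip(\Sp^{d-1})$ has zero average and $b \in \VMO^\alpha_\nu(\R^d)$; hence $[b, T_{\Omega_j}] \colon L^u(\mu) \to L^v(\lambda)$ is compact. Writing $T_\Omega - T_{\Omega_j} = T_{\Omega - \Omega_j}$ (by linearity of the formal definition, noting that $\Omega - \Omega_j$ has zero average), we decompose
\[
[b, T_\Omega] = [b, T_{\Omega_j}] + [b, T_{\Omega - \Omega_j}].
\]
The error term is controlled by Corollary \ref{corollary:bloomBddforLqkernels}, which, together with Lemma \ref{lemma:vmoinsidebmo} (so that $\|b\|_{\BMO_\nu^\alpha} < \infty$), gives
\[
\|[b, T_{\Omega - \Omega_j}]\|_{L^u(\mu) \to L^v(\lambda)} \lesssim_{d,q,u,v,\mu,\lambda} \|\Omega - \Omega_j\|_{L^q(\Sp^{d-1})} \|b\|_{\BMO^\alpha_\nu} \xrightarrow{j \to \infty} 0.
\]
Thus $[b, T_\Omega]$ is the operator-norm limit of the compact operators $[b, T_{\Omega_j}]$, and is therefore itself compact.

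I do not foresee a serious obstacle here: the density of $\Lip(\Sp^{d-1})$ in $L^q(\Sp^{d-1})$ is classical, and the two ingredients of the argument (compactness for smooth kernels via \cite{HOS2023}, and the quantitative Bloom-type bound with the $L^q$-norm of the kernel on the right-hand side) are already in place as Lemma \ref{lemma:ifsmoothusehosandvmoforcompactness} and Corollary \ref{corollary:bloomBddforLqkernels}. The only mild subtlety is the mean-zero reduction described above, which is handled by subtracting a vanishing constant.
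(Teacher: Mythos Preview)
Your proof is correct and follows essentially the same approach as the paper: approximate $\Omega$ in $L^q(\Sp^{d-1})$ by Lipschitz functions, subtract their averages to restore the zero-mean condition, invoke Lemma \ref{lemma:ifsmoothusehosandvmoforcompactness} for compactness of the approximating commutators, and control the error via Corollary \ref{corollary:bloomBddforLqkernels} together with Lemma \ref{lemma:vmoinsidebmo}. The only cosmetic difference is that the paper phrases the approximation with an $\eps$-parameter rather than a sequence indexed by $j$.
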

\begin{proof}
Suppose that $\eps>0$, and let $\Omega_\eps\in \Lip(\Sp^{d-1})$ such that  $\|\Omega_\eps-\Omega\|_{L^{q}(\Sp^{d-1})}<\eps$. Then the sequence 
\[
    \Omega_{\eps,0}\coloneqq\Omega_\eps-\fint_{\Sp^{d-1}}\Omega_\eps
\]
clearly satisfies $\Omega_{\eps,0}\in \Lip(\Sp^{d-1})$ and $\int_{\Sp^{d-1}}\Omega_{\eps,0}=0$. Furthermore, we have
\begin{align*}
    \|\Omega-\Omega_{\eps,0}\|_{L^{q}(\Sp^{d-1})}&\leq \|\Omega-\Omega_{\eps}\|_{L^{q}(\Sp^{d-1})}+\|\fint_{\Sp^{d-1}}\Omega_{\eps}\|_{L^{q}(\Sp^{d-1})}\\&= \|\Omega-\Omega_{\eps}\|_{L^{q}(\Sp^{d-1})}+|\Sp^{d-1}|^{\frac{1}{q}-1}\,|\int_{\Sp^{d-1}}\Omega_{\eps}-\Omega|
    \\&\leq \|\Omega-\Omega_{\eps}\|_{L^{q}(\Sp^{d-1})}+|\Sp^{d-1}|^{\frac{1}{q}-1+\frac{1}{q'}}\,\|\Omega-\Omega_{\eps}\|_{L^{q}(\Sp^{d-1})}
    \\&= 2\|\Omega-\Omega_{\eps}\|_{L^{q}(\Sp^{d-1})}
    \leq 2\eps.
\end{align*}
We decompose the commutator
\[
    [b,T_\Omega]=[b,T_{\Omega-\Omega_{\eps,0}+\Omega_{\eps,0}}]=[b,T_{\Omega-\Omega_{\eps,0}}]+[b,T_{\Omega_{\eps,0}}],
\]
and it suffices to show that $[b,T_{\Omega_{\eps,0}}]$ is compact and $[b,T_{\Omega-\Omega_{\eps,0}}]$ is arbitrarily small in the $L^u(\mu)\to L^v(\lambda)$ norm. By Lemma \ref{lemma:ifsmoothusehosandvmoforcompactness}, we only need to show the latter. By Corollary \ref{corollary:bloomBddforLqkernels} we have that
\begin{align*}
    \|[b,T_{\Omega-\Omega_{\eps,0}}]\|_{L^u(\mu)\to L^v(\lambda) }&\lesssim_{d,q,u,v,\mu,\lambda} \|\Omega-\Omega_{\eps,0}\|_{L^{q}(\Sp^{d-1})}\|b\|_{\mathrm{BMO}_\nu^{\alpha}}\lesssim \eps\,\|b\|_{\mathrm{BMO}_\nu^{\alpha}},
\end{align*}
which is the desired control. Note that $\|b\|_{\BMO_\nu^{\alpha}}<\infty$ by Lemma \ref{lemma:vmoinsidebmo}.
\end{proof}

\section{Necessity for Bloom-type compactness}\label{sec:necessityforcompactness}

By combining our result on the compactness of a commutator $[b,T_\Omega]$ with earlier research \cite{HOS2023}, we get a \emph{characterisation} of the compactness. We state the result next but a few words are in place after that to explain its proof.

\begin{theorem}\label{thm:compactnesscharac}
    Let $1<q<\infty$. Suppose that $\Omega \in L^q(\Sp^{d-1}) \setminus \{0\}$ has zero average over $\Sp^{d-1}$. Let also $1<u\leq v<q$, $\mu \in A_u \cap RH_{(\frac{q}{u})'}$ and $\lambda \in A_v \cap RH_{(\frac{q}{v})'}$. Define the Bloom weight $\nu$ by $\nu^{1+\alpha/d} := \mu^{1/u}\lambda^{-1/v}$, where $\alpha := d(\frac{1}{u}-\frac{1}{v})$. Let $b\in L^1_{\loc}(\R^d)$.

    Then the commutator $[b,T_\Omega]\colon L^u(\mu)\to L^v(\lambda)$ is compact if and only if $b\in \VMO^\alpha_\nu(\R^d)$.
\end{theorem}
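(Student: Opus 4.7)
The ``if'' direction is already exactly Theorem \ref{theorem:compactnessofcommutatorsbloomsetting}, since the hypotheses coincide verbatim. So the entire new content is the ``only if'' direction, and my plan is to transplant the necessity argument from \cite[Section 3]{HOS2023} to the rough setting, which as the introductory discussion suggests should go through with minimal modification.

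The overall scheme will be contrapositive. I would assume $b \notin \VMO^\alpha_\nu(\R^d)$, so that at least one of the three oscillation conditions fails along a sequence of cubes $Q_n$ --- either shrinking in side length, expanding in side length, or with centres escaping to infinity --- on which the normalised oscillation stays above some fixed $\delta > 0$. For each $Q_n$ I would select a companion cube $Q_n'$ of comparable side length at a controlled distance and direction, and define $f_n$ as a normalised bump adapted to $Q_n'$ with $\|f_n\|_{L^u(\mu)} \lesssim 1$. Evaluating $[b,T_\Omega]f_n$ on $Q_n$ and invoking the oscillation lower bound for $b$ then yields $\|[b,T_\Omega]f_n\|_{L^v(\lambda)} \gtrsim \delta$, and by arranging the pairs $(Q_n, Q_n')$ so that the supports of $f_n$ and $[b,T_\Omega]f_n$ become essentially disjoint in the Bloom-weighted sense, one obtains a pairwise $L^v(\lambda)$-distance lower bound that precludes any Cauchy subsequence and so contradicts compactness.

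The one place where the roughness of $\Omega$ genuinely intervenes is the quantitative kernel lower bound $|T_\Omega 1_{Q_n'}(x)| \gtrsim c > 0$ for $x \in Q_n$, which in \cite{HOS2023} follows immediately from pointwise kernel continuity. For $\Omega \in L^q(\Sp^{d-1}) \setminus \{0\}$ my plan is to obtain it via a Lebesgue-point argument on $\Sp^{d-1}$: since $\Omega \not\equiv 0$, one can fix a spherical cap $S_0$ on which $|\fint_{S_0} \Omega|$ is bounded below by some $c_0 > 0$, and then position $Q_n'$ relative to $Q_n$ so that the unit vectors $(y-x)/|y-x|$ with $(x,y) \in Q_n \times Q_n'$ concentrate inside a slight thickening of $S_0$; integrating $K_\Omega$ over $Q_n'$ then delivers the required averaged non-degeneracy. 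Extracting this averaged kernel lower bound cleanly is the main obstacle I expect. Once it is in place, the three geometric cases (small, large, far cubes) and the Bloom-weight bookkeeping proceed exactly as in \cite[Section 3]{HOS2023}, completing the proof.
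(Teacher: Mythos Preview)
Your overall plan is correct and structurally matches the paper's: the ``if'' direction is exactly Theorem~\ref{theorem:compactnessofcommutatorsbloomsetting}, and for ``only if'' one runs the necessity argument of \cite{HOS2023} in contrapositive form. The divergence is in how the rough-kernel non-degeneracy is handled. The paper does not re-derive a kernel lower bound at all; it simply observes that the \emph{approximate weak factorisation} machinery---specifically \cite[Lemma~2.3.3]{Hyt2021}---already covers rough homogeneous kernels with $\Omega\in L^q(\Sp^{d-1})\setminus\{0\}$, yielding the pairing estimate recorded here as Lemma~\ref{lemma:conseqeunceofawf}. With that lemma substituted for \cite[Proposition~4.2]{HOS2023}, the proof of \cite[Proposition~5.7]{HOS2023} goes through verbatim, and the paper's proof is accordingly just a pair of citations. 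Your Lebesgue-point idea on $\Sp^{d-1}$ is essentially the mechanism that drives the rough-kernel AWF in \cite{Hyt2021}, so you are on the right track but proposing to rebuild machinery that is already on the shelf. One caution if you pursue your route directly: the step ``integrating $K_\Omega$ over $Q_n'$ then delivers the required averaged non-degeneracy'' is subtler than you suggest, because $\int_{Q_n'}K_\Omega(x,y)\dif y$ is a \emph{weighted} spherical average of $\Omega$ (the weight being the $x$-dependent radial extent of $Q_n'$ in each direction), so a lower bound on the plain cap average $|\fint_{S_0}\Omega|$ does not transfer automatically; one needs the Lebesgue-point property to control weighted averages too, which is precisely the analysis packaged into \cite[Lemma~2.3.3]{Hyt2021}. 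Note also that the relevant input from \cite{HOS2023} is Proposition~5.7 together with Proposition~4.2 (the AWF pairing estimate), rather than a pointwise bound of the form $|T_\Omega 1_{Q_n'}(x)|\gtrsim c$ as you describe.
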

\begin{proof}
    One direction of the equivalence is a consequence of Theorem \ref{theorem:compactnessofcommutatorsbloomsetting}. The other direction essentially follows from the proofs of \cite[Proposition 5.7]{HOS2023} and \cite[Theorem 2.4.1]{Hyt2021}: see the remarks below.
\end{proof}

In \cite{HOS2023}, the authors do not state their results for $T$ associated to a rough homogeneous kernel $K_\Omega$. Rather, they work with kernels that satisfy a certain smoothness condition. However, their result on the necessary condition for the compactness of the commutator \cite[Proposition 5.7]{HOS2023} is also valid for rough homogeneous kernels $K_\Omega$. This validity is what suffices to show that Theorem \ref{thm:compactnesscharac} is true. In fact, the proof of \cite[Proposition 5.7]{HOS2023} works as such for the rough homogeneous kernels. We explain this next.

In the context of \cite[Proposition 5.7]{HOS2023}, non-degeneracy of $T=T_\Omega$ is interpreted to mean that $\Omega$ is non-zero in a set of positive measure, that is, $\Omega \in L^q(\Sp^{d-1}) \setminus \{0\}$. 

Arguably, the least trivial part of why the proof of \cite{HOS2023} works for non-degenerate $T_\Omega$ is that a method called \emph{approximate weak factorisation} is valid for the non-degenerate kernels of the \emph{rough homogeneous} kind as well, as demonstrated in \cite[Lemma 2.3.3]{Hyt2021}. Therefore, if one follows the proof of \cite[Theorem 2.4.1]{Hyt2021}, then one gets the following lemma that resembles \cite[Proposition 4.2]{HOS2023}, as a consequence of the approximate weak factorisation:

\begin{lemma}\label{lemma:conseqeunceofawf}
    Suppose that $\Omega \in L^q(\Sp^{d-1}) \setminus \{0\}$ has zero average over $\Sp^{d-1}$ and $b\in L^1_{\loc}(\R^d)$. Let $Q$ be a fixed cube. Then there exists a cube $\wt{Q}$ such that $\mathrm{dist}(Q,\wt{Q}) \eqsim \ell(Q)=\ell(\wt{Q})$ and for any $\gamma$-major subsets $E\subset Q$ and $\wt{E} \subset \widetilde{Q}$ (that is, $|E|\geq \gamma|Q|$ and $|\wt{E}|\geq\gamma|\wt{Q}|$) we have
    \begin{equation}\label{eq:awfconclusion}
    \int_E|b-\ave{b}_E| \dif x \lesssim |\langle [b,T_\Omega]h_E, g_{\wt{E}}\rangle| + |\langle [b,T_\Omega]g_E, h_{\wt{E}}\rangle|,
    \end{equation}
    where the auxiliary functions satisfy 
    \[
    g_E=1_E, \qquad g_{\wt{E}} = 1_{\wt{E}}, \qquad h_E\lesssim 1_E, \qquad h_{\wt{E}} \lesssim 1_{\wt{E}}.
    \]
    All the above implicit constants depend at most on the kernel of $T_\Omega$, the dimension $d$ and $\gamma$.
\end{lemma}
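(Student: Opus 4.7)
The plan is to adapt the approximate weak factorisation (AWF) argument used in the proof of \cite[Theorem 2.4.1]{Hyt2021}, which establishes an unweighted statement of essentially this shape for rough $T_\Omega$. Since the conclusion \eqref{eq:awfconclusion} does not involve weights at all and the only hypothesis on $b$ is local integrability, I expect no substantial change from the unweighted argument.

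First, I would set up the companion cube $\wt{Q}$ using the non-degeneracy of $\Omega$. Because $\Omega \in L^q(\Sp^{d-1})\setminus\{0\}$ has zero mean, there must exist a spherical cap $S \subset \Sp^{d-1}$ with $|\int_S \Omega| \geq c_0 > 0$; otherwise the vanishing of $\int_S \Omega$ over every cap would force $\Omega \equiv 0$ by a standard density argument. Let $\theta_0$ be the centre of such a cap. Given $Q$, I would take $\wt{Q}$ to be the translate of $Q$ by a fixed multiple of $\ell(Q)$ in the direction $\theta_0$, with the multiplier depending on the aperture of $S$. This produces $\ell(\wt{Q}) = \ell(Q)$ and $\mathrm{dist}(Q,\wt{Q}) \eqsim \ell(Q)$, with implicit constants depending only on $\Omega$ and $d$.

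Next I would pass from the oscillation of $b$ to a bilinear pairing. Setting $\sigma_E := \overline{\mathrm{sgn}(b - \ave{b}_E)}\, 1_E$ and using the standard fact that $\int_E |b - \ave{b}_E|\,\dif x \lesssim \int_E |b - \ave{b}_{\wt{E}}|\,\dif x$, I would obtain
\[
\int_E |b - \ave{b}_E|\,\dif x \lesssim \frac{1}{|\wt{E}|}\iint_{E \times \wt{E}} (b(x) - b(y))\,\sigma_E(x)\,\dif y\,\dif x.
\]
The remaining goal is to reinterpret this double integral as a sum of commutator pairings. Concretely, one needs to rewrite $\sigma_E(x)/|\wt{E}|$ inside the integral in the form $K_\Omega(x,y)\cdot(\text{bounded in }y)$, plus a dual term $K_\Omega(y,x)\cdot(\text{bounded in }x)$, up to a controlled error; once this is achieved, grouping terms produces exactly $\langle [b,T_\Omega] h_E, g_{\wt{E}}\rangle$ and $\langle [b,T_\Omega] g_E, h_{\wt{E}}\rangle$ with the required bounds on $h_E, h_{\wt{E}}$ and with $g_E = 1_E$, $g_{\wt{E}} = 1_{\wt{E}}$.

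The hard part is this kernel-insertion step, which is exactly the AWF for rough kernels in \cite[Lemma 2.3.3]{Hyt2021}. In the standard Calder\'on--Zygmund case one uses a pointwise lower bound $|K(x,y)| \gtrsim \ell(Q)^{-d}$ on the pair $(Q,\wt{Q})$, but for rough $\Omega$ the kernel can vanish on large subsets of $Q \times \wt{Q}$, so the pointwise bound fails. The remedy is an averaged version: the quantity $\int_{\wt{E}} K_\Omega(x,y)\,\dif y$ is uniformly bounded below in $x \in E$ thanks to the cap chosen in the first step, since the cone of directions from $x$ to $\wt{E}$ covers a definite portion of $S$. Invoking \cite[Lemma 2.3.3]{Hyt2021} to package this estimate, and then following the final steps of \cite[Theorem 2.4.1]{Hyt2021} verbatim to assemble the bilinear form into commutator pairings, completes the proof.
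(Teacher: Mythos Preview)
Your proposal is correct and follows exactly the route the paper itself indicates: both defer to \cite[Lemma 2.3.3]{Hyt2021} for the approximate weak factorisation valid for rough homogeneous kernels, and then follow the proof of \cite[Theorem 2.4.1]{Hyt2021} to assemble the oscillation bound into the two commutator pairings. One small slip: for your displayed double-integral identity to hold as written you need $\sigma_E=\overline{\mathrm{sgn}(b-\ave{b}_{\wt E})}\,1_E$ rather than $\overline{\mathrm{sgn}(b-\ave{b}_E)}\,1_E$, but this is cosmetic and does not affect the argument.
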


Thus the black box (the approximate weak factorisation) that is used in the proof of \cite[Proposition 5.7]{HOS2023} is also valid for non-degenerate rough homogeneous kernels. In addition to this part, the other parts of the proof are easily seen to be independent of which type of kernel is in consideration.

\begin{remark}
Note: The conclusion of \cite[Proposition 4.2]{HOS2023} contains a typo that does not affect the main results of \cite{HOS2023}. The typo has been corrected above in \eqref{eq:awfconclusion}. Indeed, the commutator should be applied to the auxiliary functions that are supported in $E$ (as is the case in \eqref{eq:awfconclusion}), not to the ones supported in $\wt{E}$. In the paper \cite{HOS2023}, a trivial modification is required in the proof of \cite[Proposition 4.3]{HOS2023} but the typo has no other implications.
\end{remark}

\section{Matrix-weighted compactness}\label{sec:matrixweightedcompactness}
A matrix weight is a locally integrable function $W \colon \R^d \to \CC^{
n\times n}$ that is almost everywhere
positive definite -valued. The space
$L^p(W)$ consists of all measurable $\vec f \colon \R^d \to \CC^n$ such that $W^\frac{1}{p} \vec f \in L^p(\R^d,\CC^n)$,
and $\|
\vec f\|_{L^p(W)}
\coloneqq \|W^\frac{1}{p} \vec f\|_{L^p(\R^d,\CC^n)}$. For a matrix weight $W \colon \R^d \to \CC^{
n\times n}$ and $1<p<\infty$, we
use the definition of $A_p$ that appeared for the first time in \cite{Rou2003}, i.e.,
\[
    [W]_{A_p}\coloneqq\sup\limits_{Q}\fint_Q\left(\fint_Q\Big|W^{\frac{1}{p}}(x)W^{-\frac{1}{p}}(y)\Big|^{p'}_{op}\dif y\right)^\frac{p}{p'}\dif x
\]
and say that $W\in A_p$ if $[W]_{A_p}<\infty$. We will use the matrix-weighted maximal function $M_W$ of Goldberg and Christ \cite{CG2001,Gol2003}, which is defined by 
\begin{equation}\label{eq:matrixweightedmaxfunc}
    M_W\vec f(x)\coloneqq \sup_{Q\ni x}\fint_{Q}|W^\frac{1}{p}(x)W^{-\frac{1}{p}}(y)\vec f(y)|\dif y.
\end{equation}
The main property of $M_W$ that will be used is the boundedness of $M_W$ on $L^p(\R^d)$ when $W\in A_p$. See \cite[Theorem 3.2]{Gol2003} and \cite[Theorem 1.3]{IM2019} for more details on this fact.

In this section we will strive to prove a matrix-weighted compactness result for $[b,T_\Omega]$. The main ingredient of the proof of this result is the following matrix-weighted Kolmogorov-Riesz compactness theorem due to \cite[Corollary 3.2]{LYZ2023}.
\begin{theorem}\label{thm:kolmogorovriesz}
    Let $1<p<\infty$ and let $W$ be a matrix weight. A subset $\mathscr F$ of $L^p(W)$ is totally bounded if the following conditions hold:
    \begin{enumerate}
        \item[(a)]$\mathscr F$ is bounded, that is, \[\sup_{\vec f\in\mathscr F}\|\vec f\|_{L^p(W)}<\infty;\]
        \item[(b)] $\mathscr F$ uniformly vanishes at infinity, that is,
        \[
            \lim_{R\to\infty}\sup_{\vec f\in\mathscr F}\|\vec f \,\mathds{1}_{B(0,R)^\complement}\|_{L^p(W)}=0;
        \]
        \item[(c)] $\mathscr F$ is equicontinuous, that is,
        \[
            \lim_{r\to0}\sup_{\vec f\in\mathscr F}\sup_{z\in B(0,r)}\|\tau_z\vec f-\vec f\|_{L^p(W)}=0,
        \]
    where $\tau_z$ is the translation operator defined by
    \[
        \tau_z\vec f(x)\coloneqq \vec f(x+z).
    \]
    \end{enumerate}
\end{theorem}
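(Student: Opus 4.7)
The plan is to adapt the classical scalar Kolmogorov--Riesz strategy to the matrix-weighted setting, using condition (c) as the sole substitute for translation continuity in $L^p(W)$. I will show that $\mathscr F$ is totally bounded by producing, for any fixed $\eps > 0$, a finite $\eps$-net. First, by (b) I pick $R > 0$ so that $\|\vec f\,\mathds{1}_{B(0,R)^\complement}\|_{L^p(W)} < \eps/3$ for every $\vec f \in \mathscr F$; it then suffices to find a finite $(2\eps/3)$-net for the truncated family $\{\vec f\,\mathds{1}_{B(0,R)} : \vec f \in \mathscr F\}$.

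For the main step I tile $B(0,R)$ by finitely many disjoint cubes $\{Q_j\}_{j=1}^N$ of common side length $\delta > 0$ and introduce the piecewise-constant projection $\Pi_\delta \vec f := \sum_{j=1}^N \ave{\vec f}_{Q_j}\, \mathds{1}_{Q_j}$. For $x \in Q_j$ the change of variables $z = y - x$ gives
\[
\Pi_\delta \vec f(x) - \vec f(x) = \fint_{Q_j - x}\bigl(\tau_z \vec f(x) - \vec f(x)\bigr)\dif z,
\]
where $Q_j - x \subset B(0,\sqrt d\, \delta)$. Applying Jensen's inequality (using convexity of $|\cdot|^p$), enlarging the domain of averaging to $B(0,\sqrt d\, \delta)$, integrating over $Q_j$, summing over $j$, and swapping the order of integration via Fubini produces
\[
\|\Pi_\delta \vec f - \vec f\,\mathds{1}_{B(0,R)}\|_{L^p(W)}^p \lesssim_d \sup_{z \in B(0,\sqrt d\, \delta)} \|\tau_z \vec f - \vec f\|_{L^p(W)}^p.
\]
By (c) the right-hand side tends to $0$ uniformly over $\mathscr F$ as $\delta \to 0$, so I choose $\delta$ small enough to force this error below $\eps/3$ for every $\vec f \in \mathscr F$.

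The final reduction is immediate: $\{\Pi_\delta \vec f\}_{\vec f \in \mathscr F}$ lies in the finite-dimensional subspace $V_\delta := \mathrm{span}\{\mathds{1}_{Q_j} \vec e_k : 1 \le j \le N,\, 1 \le k \le n\}$ of $L^p(W)$, and by (a) together with the previous step it is norm-bounded there. A bounded subset of a finite-dimensional normed space is totally bounded, so $\{\Pi_\delta \vec f\}$ admits a finite $(\eps/3)$-net, and combining with the preceding truncation and discretization yields a finite $\eps$-net for $\mathscr F$ in $L^p(W)$.

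The main obstacle is the discretization estimate: for an arbitrary matrix weight $W$ neither convolution with a smooth mollifier nor pointwise averaging interacts cleanly with $W$. The trick that makes it work is to recognise $\Pi_\delta \vec f - \vec f$ as an average of the very translates $\tau_z \vec f - \vec f$ that appear in (c), so that Fubini transfers the weighted $L^p$ norm inside the average and reduces the whole error to the equicontinuity modulus, uniformly small on $\mathscr F$ by hypothesis. A minor technical point is that $\ave{\vec f}_{Q_j}$ requires $\vec f \in L^1_{\loc}$, which is harmless when $W^{-1/p} \in L^{p'}_{\loc}$ (for instance when $W \in A_p$) and can anyway be bypassed by replacing $\Pi_\delta$ with a smooth convolution approximation $\phi_\delta * \vec f$, for which the identical Minkowski/Fubini calculation goes through.
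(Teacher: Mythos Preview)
The paper does not prove this statement; it is quoted from \cite[Corollary~3.2]{LYZ2023} and used only as a black box in Section~\ref{sec:matrixweightedcompactness}. There is therefore no argument in the paper to compare yours against.

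Your direct proof is the classical Kolmogorov--Riesz scheme and the main line---truncate via (b), approximate by the piecewise-constant projection $\Pi_\delta$, rewrite $\Pi_\delta\vec f(x)-\vec f(x)$ as an average of translates, and apply Jensen/Fubini to reduce to (c)---is correct. One small caveat concerns your closing remark: replacing $\Pi_\delta$ by a mollification $\phi_\delta*\vec f$ does yield, via Minkowski, the analogous estimate $\|\phi_\delta*\vec f-\vec f\|_{L^p(W)}\le\sup_{|z|\le\delta}\|\tau_z\vec f-\vec f\|_{L^p(W)}$, but the family $\{\phi_\delta*\vec f:\vec f\in\mathscr F\}$ no longer sits in a finite-dimensional subspace of $L^p(W)$, so the concluding compactness step would not go through as written. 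This does not affect the paper's application: there one has $W\in A_{p/q'}\subset A_p$, whence $|W^{-1/p}|_{op}\in L^{p'}_{\loc}$ and $L^p(W)\subset L^1_{\loc}(\R^d,\CC^n)$, so the averages $\ave{\vec f}_{Q_j}$ are well defined and your original $\Pi_\delta$ argument works without modification.
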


Now we will prove that the commutator $[b,T_\Omega]$ of a rough singular integral $T_\Omega$ with $b\in \mathrm{VMO}(\R^d)$ is $L^p(W)\to L^p(W)$ compact. The proof has similar elements with the scalar-valued proofs in \cite{GHWY2022,GWY2021}.

\begin{theorem}\label{thm:matrixweightcompactness}
    Let $W$ be a matrix weight and $b\in \operatorname{VMO}(\R^d)$. Let also $1<q<\infty$, $q'<p<\infty$, $W\in A_{\frac{p}{q'}}$ and $\Omega\in L^{q+\eps}(\Sp^{d-1})$, $\eps>0$, has zero average over $\Sp^{d-1}$. Then the commutator $[b,T_\Omega]\colon L^p(W)\to L^p(W)$ is compact.
\end{theorem}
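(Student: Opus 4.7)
The proof applies the matrix-weighted Kolmogorov-Riesz theorem (Theorem \ref{thm:kolmogorovriesz}) to the image $\mathscr F = \{[b, T_\Omega]\vec f : \|\vec f\|_{L^p(W)} \leq 1\}$, after two density-based reductions that parallel those of Theorem \ref{theorem:compactnessofcommutatorsbloomsetting} in the Bloom setting.

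The first reduction smooths the kernel: approximate $\Omega$ in $L^{q+\eps}(\Sp^{d-1})$ by zero-average Lipschitz functions $\Omega_\delta \in \Lip(\Sp^{d-1})$. The key input here is a matrix-weighted analogue of Theorem \ref{theorem:bloomBddforLOkernels}, namely $\|[b, T_\Omega]\|_{L^p(W)\to L^p(W)} \lesssim \|\Omega\|_{L^{q,1}\log L(\Sp^{d-1})}\|b\|_{\BMO}$ whenever $W \in A_{p/q'}$. This, combined with the embedding $L^{q+\eps}(\Sp^{d-1}) \hookrightarrow L^{q,1}\log L(\Sp^{d-1})$ from Lemma \ref{lemma:lorentzorlicztolebesgue}, makes $\|[b, T_{\Omega - \Omega_\delta}]\|_{L^p(W)\to L^p(W)}$ arbitrarily small. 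By Lemma \ref{lemma:smoothomegagivesczo}, $T_{\Omega_\delta}$ is then a Calder\'on-Zygmund operator. The second reduction smooths the symbol: approximate $b \in \VMO(\R^d)$ by $b_\eta \in C_c^\infty(\R^d)$ in the BMO norm, and use the same commutator bound to replace $b$ by $b_\eta$ up to a small error.

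It therefore suffices to verify the three Kolmogorov-Riesz conditions for $\mathscr F' = \{[b_\eta, T_{\Omega_\delta}]\vec f : \|\vec f\|_{L^p(W)} \leq 1\}$, where $b_\eta \in C_c^\infty$ is supported in $B(0, R_0)$ and $T_{\Omega_\delta}$ is a Calder\'on-Zygmund operator. Condition (a) follows from the commutator boundedness. For condition (b), when $|x| > 2 R_0$ one has $[b_\eta, T_{\Omega_\delta}]\vec f(x) = -T_{\Omega_\delta}(b_\eta \vec f)(x)$, and the size bound $|K_{\Omega_\delta}(x, y)| \lesssim |x|^{-d}$ for $y \in B(0, R_0)$ yields pointwise decay that, coupled with a local H\"older estimate tying $\vec f$ on $B(0, R_0)$ to the matrix-weighted norm, forces uniform vanishing as $R \to \infty$. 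For condition (c), the smoothness of $b_\eta$ and the standard Lipschitz regularity of the Calder\'on-Zygmund kernel $K_{\Omega_\delta}$ let one decompose $\tau_z[b_\eta, T_{\Omega_\delta}]\vec f - [b_\eta, T_{\Omega_\delta}]\vec f$ into a ``mean value'' part controlled by $\|\nabla b_\eta\|_\infty |z|\,|T_{\Omega_\delta}\vec f|$ and a ``kernel smoothness'' part controlled by the usual H\"older modulus of continuity of $K_{\Omega_\delta}$, both dominated pointwise by $M_W \vec f$ up to factors tending to $0$ with $|z|$.

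The main obstacle is the matrix-weighted commutator boundedness used in both reductions: while Corollary \ref{corollary:bloomBddforLqkernels} gives the Bloom analogue, its matrix-weighted counterpart must be produced, presumably by combining the convex body domination of \cite{Lau2024} with the boundedness of $M_W$ on $L^p(\R^d)$ for $W \in A_p$. Dominating the commutator pointwise in the matrix sense (as opposed to scalarly) is the subtle point; once this is available, the Kolmogorov-Riesz verification follows the template of the scalar-weighted arguments in \cite{GHWY2022, GWY2021}.
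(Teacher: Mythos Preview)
Your overall strategy matches the paper's: reduce to $\Omega \in \Lip(\Sp^{d-1})$ via the matrix-weighted commutator bound from \cite[Corollary~6.3]{Lau2024} combined with Lemma~\ref{lemma:lorentzorlicztolebesgue} (this bound is already available, not speculative---your last paragraph overstates the difficulty here), then reduce to $b \in C^\infty_c(\R^d)$ via Uchiyama's density theorem, and finally verify the Kolmogorov--Riesz conditions. Conditions~(a) and~(b) are handled essentially as you describe.

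There is, however, a genuine gap: you are missing a third reduction that the paper carries out \emph{before} applying Theorem~\ref{thm:kolmogorovriesz}, namely a smooth truncation of the kernel near the diagonal. The paper replaces $T_\Omega$ by an operator $T_{\Omega,\delta}$ with kernel $(1-\phi_\delta(x-y))K_\Omega(x,y)$, where $\phi_\delta$ is a bump supported in $B(0,\delta)$, and shows that $\|[b,T_\Omega]-[b,T_{\Omega,\delta}]\|_{L^p(W)\to L^p(W)} \lesssim_{b,\Omega,W} \delta$ using the matrix-weighted maximal function $M_W$. This step is essential for your verification of condition~(c). Without it, your ``kernel smoothness'' term is
\[
\int_{\R^d} (b(x+z)-b(y))\bigl(K_{\Omega_\delta}(x+z,y)-K_{\Omega_\delta}(x,y)\bigr)\vec f(y)\dif y,
\]
and the Calder\'on--Zygmund regularity bound $|K_{\Omega_\delta}(x+z,y)-K_{\Omega_\delta}(x,y)| \lesssim |z|\,|x-y|^{-d-1}$ is only valid for $|x-y| \geq 2|z|$; on the complementary region the individual kernels are singular at $y=x$ and $y=x+z$. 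Even exploiting the Lipschitz bound $|b(x+z)-b(y)| \lesssim |x+z-y|$ to render each piece locally integrable leaves you with averages over balls centred at $x+z$ rather than $x$, which do not fit directly into the definition of $M_W(W^{1/p}\vec f)(x)$ and so cannot be bounded in $L^p(\R^d)$ as you assert. With the truncation, both $K_{\Omega,\delta}(x+z,y)$ and $K_{\Omega,\delta}(x,y)$ vanish for $|x-y| \leq \delta/4$ once $|z|\leq\delta/8$, so the integral lives entirely where the smoothness estimate applies and the resulting dyadic sum $\sum_{j\geq -2}2^{-j}$ converges. Your ``mean value'' term likewise benefits: it becomes $(b(x+z)-b(x))\int K_{\Omega,\delta}(x,y)\vec f(y)\dif y$ with the integral evaluated at $x$, which splits cleanly into a near-diagonal piece bounded by $M_W$ and a truncated singular integral that is bounded on $L^p(W)$.
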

\begin{proof}
    Due to \cite[Corollary 6.3]{Lau2024} and Lemma \ref{lemma:lorentzorlicztolebesgue}, we may assume $\Omega\in\operatorname{Lip}(\Sp^{d-1})$. Furthermore, a  result proved in \cite{Uch1978} says that $\VMO(\R^d)$ is the closure of $C^\infty_c(\R^d)$ under the $\BMO(\R^d)$ norm, so we may also assume that $b\in C^\infty_c(\R^d)$.
    
    Let $\phi_\delta(x)\coloneqq\phi(x/\delta)$, where $0\leq \phi\leq 1$ 
    is a smooth function supported on $B(0,1)$ and equal to one on $B(0,1/2)$. Then we define $T_{\Omega,\delta}$ to be the operator with kernel $K_{\Omega,\delta}(x,y)\coloneqq (1-\phi_\delta(x-y))K_\Omega(x,y)$. By the mean value theorem and kernel estimates, we have  
    \begin{align*}
        |W^\frac{1}{p}(x)([b,T_\Omega]\vec f(x)-&[b,T_{\Omega,\delta}]\vec f(x))|\\&=\Big|\text{p.v.}\int_{\R^d}(b(x)-b(y))\phi_\delta(x-y)K_{\Omega}(x,y)W^\frac{1}{p}(x)\vec f(y)\dif y\Big|\\&\lesssim_{b,\Omega} \int_{|x-y|\leq \delta}\frac{1}{|x-y|^{d-1}}\,|W^\frac{1}{p}(x)\vec f(y)|\dif y\\&= \int_{|x-y|\leq \delta}\frac{1}{|x-y|^{d-1}}\,|W^\frac{1}{p}(x)W^{-\frac{1}{p}}(y)W^\frac{1}{p}(y)\vec f(y)|\dif y.
    \end{align*}
    Then we decompose the last integral and estimate
    \begin{align*}
        \sum_{j=0}^\infty\int_{2^{-j-1}\delta<|x-y|\leq 2^{-j}\delta}&\,\frac{1}{|x-y|^{d-1}}\,|W^\frac{1}{p}(x)W^{-\frac{1}{p}}(y)W^\frac{1}{p}(y)\vec f(y)|\dif y\\ \lesssim &\,\delta\sum_{j=0}^\infty2^{-j} \fint_{|x-y|\leq 2^{-j}\delta}|W^\frac{1}{p}(x)W^{-\frac{1}{p}}(y)W^\frac{1}{p}(y)\vec f(y)|\dif y
         \\\lesssim &\,\delta M_W(W^\frac{1}{p}\vec f)(x),
    \end{align*}
    where $M_W$ is the matrix-weighted maximal function \eqref{eq:matrixweightedmaxfunc}.
    Due to the boundedness of the matrix-weighted maximal function we get
    \[
        \|[b,T_\Omega]\vec f-[b,T_{\Omega,\delta}]\vec f\|_{L^p(W)}\lesssim_{b,\Omega}\delta\, \|M_W(W^\frac{1}{p}\vec f)\|_{L^p(\R^d)}\lesssim_{W} \delta\,\|\vec f\|_{L^p(W)}
    \]
    Since $\delta$ can be arbitrarily small, it suffices to show that $[b,T_{\Omega,\delta}]\colon L^p(W)\to L^p(W)$ is compact. We note that the truncated kernel satisfies 
    \begin{equation}\label{eq:smoothtruncatedkernelest}
        |K_{\Omega,\delta}(x,y)-K_{\Omega,\delta}(x',y)|\lesssim \frac{|x-x'|}{|x-y|^{d+1}},\quad 2|x-x'|\leq |x-y|.
    \end{equation}
    
    By \cite[Corollary 6.3]{Lau2024} we have that $[b,T_{\Omega}]$ is $L^p(W)\to L^p(W)$ bounded, and hence
    \begin{align*}
        \|[b,T_{\Omega,\delta}]\vec f\|_{L^p(W)}&\leq \|[b,T_{\Omega,\delta}]\vec f-[b,T_{\Omega}]\vec f\|_{L^p(W)}+\|[b,T_{\Omega}]\vec f\|_{L^p(W)}\\&\lesssim_{b,W,\Omega}\|\vec f\|_{L^p(W)}<\infty,
    \end{align*}
    which takes care of the first part of Theorem \ref{thm:kolmogorovriesz}.

    Considering the second part, we suppose that $Q$ is a cube centered at the origin such that $\supp b\subset Q$. Then for $x$ sufficiently far away from the origin we have
    \begin{align*}
        |W^\frac{1}{p}(x)[b,T_{\Omega,\delta}]\vec f(x)|^p&=\Big|W^\frac{1}{p}(x)\int_{\R^d}b(y)K_{\Omega,\delta}(x,y)\vec f(y)\dif y\Big|^p\\&\lesssim_{b,\Omega}|W^\frac{1}{p} (x) |_{op}^p\frac{1}{|x|^{dp}}\left(\int_{Q}|\vec f(y)|\dif y\right)^p\\&\leq|W(x)|_{op}\frac{1}{|x|^{dp}}\left(\int_{Q}|\vec f(y)|\dif y\right)^p,
    \end{align*}
    where the last inequality is true due to the Cordes inequality.
    Moreover, an application H\"older's inequality and another application of the Cordes inequality yields
    \[
        \int_{Q}|\vec f(y)|\dif y\leq \int_{Q}|W^{-\frac{1}{p}}(y)|_{op}|W^\frac{1}{p}(y)\vec f(y)|\dif y\leq \left(\int_Q|W^{-\frac{p'}{p}}(y)|_{op}\dif y\right)^\frac{1}{p'}\|\vec f\|_{L^p(W)}.
    \]
    Thus for sufficiently large $N>0$ we have
    \begin{align*}
        &\left(\int_{(B(0,2^N))^\complement}|W^\frac{1}{p}(x)[b,T_{\Omega,\delta}]\vec f(x)|^p\dif x\right)^\frac{1}{p}\\&\qquad\qquad\qquad\lesssim_{b,\Omega}\|\vec f\|_{L^p(W)}\left(\int_Q|W^{-\frac{p'}{p}}(y)|_{op}\dif y\right)^\frac{1}{p'}\left(\int_{(B(0,2^N))^\complement}\frac{|W(x)|_{op}}{|x|^{dp}}\dif x\right)^\frac{1}{p}.
    \end{align*}
    We estimate the last term of the product on the right-hand side as follows
    \begin{align*}
        \int_{(B(0,2^N))^\complement}
        \frac{|W(x)|_{op}}{|x|^{dp}}\dif x
        &\leq\sum_{k=\lfloor N\rfloor}^\infty \int_{B(0,2^{k+1})\setminus B(0,2^k)}\frac{|W(x)|_{op}}{|x|^{dp}}\dif x\\&\leq \sum_{k=\lfloor N\rfloor}^\infty 2^{-dkp}\int_{B(0,2^{k+1})\setminus B(0,2^k)}|W(x)|_{op}\dif x.
    \end{align*}
    We note that for any $w\in A_p$ we have the doubling property
    \[
        w(B(0,r))\lesssim \frac{r^{dp}}{s^{dp}}[w]_{A_{p}}w(B(0,s)), \quad r\geq s>0,
    \]
    and there exists a $\gamma>0$ such that $w\in A_{p-\gamma}$. Applying these facts with $w\coloneqq |W|_{op}\in A_p$ yields  
    \begin{align*}
        \int_{B(0,2^{k+1})\setminus B(0,2^k)}w(x)\dif x&\lesssim [w]_{A_p}w(B(0,2^k))\lesssim[w]_{A_p}2^{dkp}2^{-dk\gamma}[w]_{A_{p-\gamma}}w(B(0,1)),
    \end{align*}
    and hence
    \begin{align*}
        \int_{(B(0,2^N))^\complement}
        \frac{w(x)}{|x|^{dp}}\dif x&\lesssim[w]_{A_p}[w]_{A_{p-\gamma}}w(B(0,1)) \sum_{k=\lfloor N\rfloor}^\infty 2^{-dk\gamma}\\&\eqsim 2^{-dN\gamma}[w]_{A_p}[w]_{A_{p-\gamma}}w(B(0,1)).
    \end{align*}
    Therefore, with
    \[C(W)\coloneqq [|W|_{op}]_{A_{p}}[|W|_{op}]_{A_{p-\gamma}}\int_{B(0,1)}|W(y)|_{op}\dif y\left(\int_Q|W^{-\frac{p'}{p}}(y)|_{op}\dif y\right)^\frac{p}{p'}<\infty\] we have
    \[
        \left(\int_{(B(0,2^N))^\complement}|W^\frac{1}{p}(x)[b,T_{\Omega,\delta}]\vec f(x)|^p\dif x\right)^\frac{1}{p}\lesssim_{b,\Omega} 2^{-\frac{dN\gamma}{p}}C(W)^\frac{1}{p}\|\vec f\|_{L^p(W)}\xrightarrow{N\to\infty} 0,
    \]
    which concludes the proof of the second item.

    All that is left is the proof of the last item in Theorem \ref{thm:kolmogorovriesz}. To this end, we take $z\in\R^d$ with $|z|\leq \delta/8$ and write
    \begin{align*}
        [b,T_{\Omega,\delta}]\vec f(x+z)-
        [b,T_{\Omega,\delta}]\vec f(x)= I(x,z)+II(x,z),
    \end{align*}
    where 
    \[I(x,z)\coloneqq \int_{\R^d}(b(x+z)-b(y))(K_{\Omega,\delta}(x+z,y)-K_{\Omega,\delta}(x,y))\vec f(y)\dif y\]
    and 
    \[
        II(x,z)\coloneqq (b(x+z)-b(x))\int_{\R^d}K_{\Omega,\delta}(x,y)\vec f(y)\dif y.
    \]
    Due to the fact that the terms $K_{\Omega,\delta}(x+z,y)$ and $K_{\Omega,\delta}(x,y)$ vanish for $|x-y|\leq \delta/4$ and \eqref{eq:smoothtruncatedkernelest} we have
    \begin{align*}
        |W^\frac{1}{p}(x)I(x,z)|&\leq 2\|b\|_{\infty}\int_{|x-y|\geq \frac{\delta}{4}}|K_{\Omega,\delta}(x+z,y)-K_{\Omega,\delta}(x,y)|\,|W^\frac{1}{p}(x)\vec f(y)|\dif y
        \\&\lesssim_{b}\int_{|x-y|\geq \frac{\delta}{4}}\frac{|z|}{|x-y|^{d+1}}|W^\frac{1}{p}(x)\vec f(y)|\dif y \\&=|z|\sum_{j=-2}^\infty\int_{2^{j+1}\delta>|x-y|\geq 2^j\delta}\frac{1}{|x-y|^{d+1}}|W^\frac{1}{p}(x)\vec f(y)|\dif y
        \\&\lesssim\frac{|z|}{\delta}\sum_{j=-2}^\infty2^{-j}\fint_{|x-y|\leq 2^{j+1}\delta}|W^\frac{1}{p}(x)W^{-\frac{1}{p}}(y)W^\frac{1}{p}(y)\vec f(y)|\dif y
        \\&\lesssim\frac{|z|}{\delta}M_{W}\left(W^\frac{1}{p}\vec f\,\right)(x).
    \end{align*}
    This yields 
    \begin{equation}\label{eq:Ibound}
        \|I(\cdot,z)\|_{L^p(W)}\lesssim_{b,\delta}|z|\left\|M_W\left(W^\frac{1}{p}\vec f\,\right)\right\|_{L^p(\R^d)}\lesssim_{W}|z|\,\|\vec f\|_{L^p(W)}.
    \end{equation}
    For the second term we apply the mean value theorem to get $|b(x+z)-b(x)|\lesssim_b|z|$,
    and hence
    \begin{align*}
        |II(x,z)|&\lesssim_{b} |z|\left(|\int_{\delta/2\leq |x-y|\leq\delta}K_{\Omega,\delta}(x,y)\vec f(y)\dif y|+|\int_{|x-y|>\delta}K_{\Omega}(x,y)\vec f(y)\dif y|\right)\\&\eqqcolon |z|\,\Big(|II_1(x)|+|II_2(x)|\Big).
    \end{align*}
    The first term can be estimated with the matrix-weighted maximal function in a similar manner as before
    \[
        |W^\frac{1}{p}(x)II_1(x)|\lesssim \fint_{|x-y|\leq \delta}|W^\frac{1}{p}(x)W^{-\frac{1}{p}}(y)W^\frac{1}{p}(y)\vec f(y)|\dif y\lesssim M_W(W^\frac{1}{p}\vec f)(x),
    \]
    and the second term is a truncated singular integral, which is bounded on $L^p(W)$ (see for instance \cite[Corollary 1.6]{DTL2020} together with \cite[Theorem 1.4]{BC2023}). Thus we get
    \begin{equation}\label{eq:IIbound}
        \|II(\cdot,z)\|_{L^p(W)}\lesssim_{b,W}|z|\|\vec f\|_{L^p(W)}.
    \end{equation}
    Combining the estimates \eqref{eq:Ibound} and \eqref{eq:IIbound} shows that
    \[
        \|\tau_z[b,T_{\Omega,\delta}]\vec f-[b,T_{\Omega,\delta}]\vec f\|_{L^p(W)}\to 0
    \]
    as $|z|\to0$.

    We have now checked that for a bounded set $B\subset L^p(W)$, the image $[b,T_{\Omega,\delta}]B$ satisfies the conditions of Theorem \ref{thm:kolmogorovriesz}. Thus by Theorem \ref{thm:kolmogorovriesz} it follows that $[b,T_{\Omega,\delta}]B$ is totally bounded in $L^p(W)$, and this implies that $[b,T_{\Omega,\delta}]$ is a compact operator. 
\end{proof}
\begin{remark}
    A result of Bownik \cite[Proposition 5.3]{Bow2001} says that generally matrix-weights do not enjoy the self-improvement property of $A_p$ classes. Therefore, we are not able to get the result of Theorem \ref{thm:matrixweightcompactness} for $\Omega\in L^q(\Sp^{d-1})$. 
\end{remark}

\begin{remark}
    To the authors' best knowledge this is the first matrix-weighted compactness result for a commutator of a singular integral. With relatively small changes to the proof, one can prove matrix-weighted compactness for a Calder\'on-Zygmund operator with a standard kernel like those studied for example in \cite{HOS2023}.
\end{remark}
\begin{remark}
    It is known in the scalar-valued setting that $b\in \operatorname{VMO}$ is necessary for the compactness of $[b,T_\Omega]$ when $\Omega\neq 0$ (see Theorem \ref{thm:compactnesscharac}). The scalar-valued case is a special case of the matrix-valued case and thus in this sense $b\in \operatorname{VMO}$ is necessary in the more general matrix-valued setting.
\end{remark}

\appendix
\section{Lipschitz functions induce kernels that satisfy size and smoothness estimates of standard kernels}\label{appendixa}
For easy reference for the interested reader, we recall the missing pieces of the proof of Lemma \ref{lemma:smoothomegagivesczo} that says that Lipschitz-functions $\Omega$ induce standard kernels $K_\Omega$ of Calder\'on-Zygmund operators, when we define
\[
K_\Omega(x,y):=\Omega\left(\frac{x-y}{|x-y|}\right)\frac{1}{|x-y|^d}
\]
for all $x,y\in \R^d$ such that $x\neq y$.

Because we extend $\Omega$ from $\Sp^{d-1}$ to $\R^d$ keeping it constant on each ray starting from the origin, we see what the Lipschitz condition of $\Omega$ transforms into in the following lemma. 

\begin{lemma}\label{lemma:howlipschitzspreadstowholespace}
    Suppose that $\Omega \in \Lip(\Sp^{d-1})$ with Lipschitz constant $M$. Let us extend $\Omega$ to the domain $\R^d\setminus\{0\}$ by setting $\Omega(x):=\Omega(x/|x|)$. Then 
    \[
    |\Omega(x)-\Omega(y)| \leq 2M \min\left\{\frac{|x-y|}{|x|}, \frac{|x-y|}{|y|}\right\}, \quad \forall x,y\in \R^d\setminus \{0\}. 
    \]
\end{lemma}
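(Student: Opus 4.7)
The plan is to reduce the claim to a purely geometric estimate measuring how the radial projection $x \mapsto x/|x|$ distorts distances. Since the extension is defined by $\Omega(x) = \Omega(x/|x|)$, the unit vectors $x/|x|$ and $y/|y|$ lie in $\mathbb{S}^{d-1}$, so the Lipschitz hypothesis on the sphere gives immediately
\[
|\Omega(x) - \Omega(y)| = \left|\Omega\bigl(\tfrac{x}{|x|}\bigr) - \Omega\bigl(\tfrac{y}{|y|}\bigr)\right| \leq M \left|\frac{x}{|x|} - \frac{y}{|y|}\right|.
\]
Thus the entire content of the lemma is the elementary inequality
\[
\left|\frac{x}{|x|} - \frac{y}{|y|}\right| \leq 2 \min\left\{\frac{|x-y|}{|x|},\, \frac{|x-y|}{|y|}\right\}.
\]

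To establish this, I would write the left-hand side over the common denominator $|x||y|$, so that it equals $|x|y| - y|x||/(|x||y|)$, and then decompose the numerator in the two symmetric ways
\[
x|y| - y|x| = |x|(x-y) + x(|y| - |x|) = |y|(x-y) + y(|y| - |x|).
\]
Apply the triangle inequality to each decomposition, together with the reverse triangle inequality $\bigl||y| - |x|\bigr| \leq |x-y|$. The first decomposition yields $|x|y| - y|x|| \leq 2|x|\,|x-y|$, hence the bound $2|x-y|/|y|$ after dividing by $|x||y|$; the second decomposition symmetrically yields the bound $2|x-y|/|x|$. Taking the minimum of the two gives the inequality, and combining with the Lipschitz estimate concludes the proof.

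This argument has no real obstacle; it is a triangle inequality exercise. The only small subtlety worth flagging is to notice that the two asymmetric decompositions of $x|y| - y|x|$ are what produces the $\min$ rather than a single-sided bound, so that the subsequent use of this lemma (to verify standard kernel estimates for $K_\Omega$) can insert either $|x|$ or $|y|$ in the denominator as convenient.
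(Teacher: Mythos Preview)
Your proof is correct and essentially identical to the paper's: both reduce to the Lipschitz bound on the sphere and then estimate $\left|\frac{x}{|x|}-\frac{y}{|y|}\right|$ via the same two decompositions of the numerator $|y|x-|x|y$ over the common denominator $|x||y|$, applying the triangle and reverse triangle inequalities. The only cosmetic difference is that the paper leaves the reverse triangle inequality implicit, whereas you name it.
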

\begin{proof}
    Let $x,y\in \R^d\setminus \{0\}$. Then 
    \[
    |\Omega(x)-\Omega(y)| = |\Omega(x/|x|)-\Omega(y/|y|)| \leq M\left|\frac{x}{|x|}-\frac{y}{|y|}\right|.
    \]
    Note that 
    \[
    \left|\frac{x}{|x|}-\frac{y}{|y|}\right| = \frac{||y|x-|x|y|}{|x||y|}=\begin{cases}
        &\frac{||y|(x-y)+(|y|-|x|)y|}{|x||y|}, \\
        &\frac{|(|y|-|x|)x+|x|(x-y)|}{|x||y|}.
    \end{cases}
    \]
    Applying the triangle inequality to both expressions we get 
    \[
    \left|\frac{x}{|x|}-\frac{y}{|y|}\right|\leq \begin{cases}
        &2\frac{|x-y|}{|x|}, \\
        &2\frac{|x-y|}{|y|}.
    \end{cases}
    \]
\end{proof}

We are now to ready to prove the size and smoothness conditions for $K_\Omega$. This provides the missing argument in the proof of Lemma \ref{lemma:smoothomegagivesczo}.

\begin{lemma}
Suppose that $\Omega \in \Lip(\Sp^{d-1})$. Then 
\[
|K_\Omega(x,y)| \leq \frac{\|\Omega\|_{L^\infty}}{|x-y|^d} \quad (\forall x,y\in \R^d , x\neq y).
\]
Furthermore,
\[
|K_\Omega(x',y) - K_\Omega(x,y)| + |K_\Omega(y,x') - K_\Omega(y,x)| \leq \omega\left(\frac{|x-x'|}{|x-y|}\right)\frac{1}{|x-y|^d}
\]
whenever $|x-x'| \leq \frac{1}{2}|x-y|$, $x\neq y$. We may choose $\omega$ so that it satisfies $\omega(t)\eqsim (t)$ for all $t\in[0,1]$.
\end{lemma}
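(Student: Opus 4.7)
The size bound is immediate since $\Omega \in \Lip(\Sp^{d-1}) \subset L^\infty(\Sp^{d-1})$: we have $|K_\Omega(x,y)| = |\Omega((x-y)/|x-y|)| \cdot |x-y|^{-d} \leq \|\Omega\|_{L^\infty} |x-y|^{-d}$. The entire content of the lemma is therefore in the smoothness estimate, which I would establish by bounding each of $|K_\Omega(x',y) - K_\Omega(x,y)|$ and $|K_\Omega(y,x') - K_\Omega(y,x)|$ separately by a constant multiple of $|x-x'| \cdot |x-y|^{-d-1}$, so that the function $\omega(t) := Ct$ satisfies the claimed equivalence $\omega(t) \eqsim t$ on $[0,1]$.

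The standing assumption $|x-x'|\leq \tfrac{1}{2}|x-y|$ gives $|x'-y| \geq |x-y|-|x-x'| \geq \tfrac12|x-y|$ (and trivially $|x'-y|\leq \tfrac{3}{2}|x-y|$), so $|x'-y| \eqsim |x-y|$. For the first difference I would use the telescoping split
\begin{equation*}
K_\Omega(x',y)-K_\Omega(x,y) = \frac{\Omega\!\left(\frac{x'-y}{|x'-y|}\right) - \Omega\!\left(\frac{x-y}{|x-y|}\right)}{|x'-y|^d} + \Omega\!\left(\frac{x-y}{|x-y|}\right)\!\left(\frac{1}{|x'-y|^d}-\frac{1}{|x-y|^d}\right).
\end{equation*}
The first summand is controlled using Lemma \ref{lemma:howlipschitzspreadstowholespace} applied to the vectors $x'-y$ and $x-y$: taking the minimum with $|x-y|$ in the denominator gives $|\Omega(x'-y)-\Omega(x-y)| \lesssim_M |x-x'|/|x-y|$, and dividing by $|x'-y|^d \eqsim |x-y|^d$ yields the required $|x-x'|\cdot|x-y|^{-d-1}$ bound. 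For the second summand, the mean value theorem applied to the function $r\mapsto r^{-d}$ on the interval between $|x-y|$ and $|x'-y|$ (both of which lie in $[\tfrac12|x-y|, \tfrac32|x-y|]$) gives
\begin{equation*}
\left|\frac{1}{|x'-y|^d}-\frac{1}{|x-y|^d}\right| \lesssim_d \frac{\bigl||x'-y|-|x-y|\bigr|}{|x-y|^{d+1}} \leq \frac{|x-x'|}{|x-y|^{d+1}},
\end{equation*}
and multiplying by $\|\Omega\|_{L^\infty}$ completes the bound.

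For the dual smoothness estimate involving $K_\Omega(y,x')-K_\Omega(y,x)$, the same proof goes through verbatim: the hypothesis $|x-x'|\leq \tfrac12|y-x|$ still yields $|y-x'|\eqsim |y-x|$, and Lemma \ref{lemma:howlipschitzspreadstowholespace} applied to $y-x'$ and $y-x$ delivers the identical estimate (the homogeneous degree $-d$ structure of $K_\Omega$ is symmetric in the two arguments up to a sign on $\Omega$'s argument, which is irrelevant to the bound). Summing the two estimates and absorbing all constants into a single $C = C(d,M,\|\Omega\|_{L^\infty})$ gives the smoothness inequality with $\omega(t)=Ct$.

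There is no real obstacle here beyond bookkeeping; the one point that requires a little care is ensuring that the split is arranged so that every denominator one encounters is comparable to $|x-y|$, which is exactly what the hypothesis $|x-x'|\leq \tfrac12|x-y|$ was designed to provide. Both the Hölder-continuity of $\Omega$ on the sphere (upgraded to the ambient space via Lemma \ref{lemma:howlipschitzspreadstowholespace}) and the smoothness of $r\mapsto r^{-d}$ away from the origin then enter as essentially independent linear perturbations, and the resulting modulus $\omega(t)=Ct$ is linear, hence trivially satisfies $\omega(t)\eqsim t$ on $[0,1]$.
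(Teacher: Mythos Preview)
Your proof is correct and follows essentially the same strategy as the paper's: both arguments split the kernel difference into an ``$\Omega$-difference'' term handled via Lemma~\ref{lemma:howlipschitzspreadstowholespace} and a ``radial'' term handled via the mean value theorem, using throughout that $|x'-y|\eqsim|x-y|$. The only cosmetic differences are that the paper first writes the kernel difference over the common denominator $|x'-y|^d|x-y|^d$ before splitting, and applies the mean value theorem to $z\mapsto z^d$ rather than to $r\mapsto r^{-d}$; these choices lead to the same bounds with the same dependence on $d$, $M$, and $\|\Omega\|_{L^\infty}$.
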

\begin{proof}
    Denote $K_\Omega(x,y):=\Omega((x-y)/|x-y|)|x-y|^{-d}$ for all $x,y\in \R^d$ such that $x\neq y$. Because $\Omega \in \Lip(\Sp^{d-1})$, there exists a constant $M>0$ such that $|\Omega(x)-\Omega(y)|\leq M|x-y|$ for all $x,y\in \Sp^{d-1}$. Note that the assumed Lipschitz continuity implies that 
    \[
    |\Omega(x)-\Omega((1,0,\ldots,0))| \leq M|x-(1,0,\ldots,0)| \leq 2M \quad (\forall x\in \Sp^{d-1}). 
    \]
    Thus $\Omega(\Sp^{d-1})$ is bounded and 
    \[
    |K_\Omega(x,y)| = \frac{|\Omega((x-y)/|x-y|)|}{|x-y|^d} \leq \frac{\|\Omega\|_{L^\infty}}{|x-y|^d} \quad (\forall x,y\in \R^d , x\neq y).
    \]
    
    For the purpose of clarity, let us extend $\Omega$ to the domain $\R^d\setminus\{0\}$ by setting $\Omega(x):=\Omega(x/|x|)$. Whenever $x,x',y\in \R^d$ are such that $|x-x'| \leq \frac{1}{2}|x-y|$ (and $x\neq y$), we have that
    \begin{align*}
        &|K_\Omega(x',y) - K_\Omega(x,y)| + |K_\Omega(y,x') - K_\Omega(y,x)| \\
        &= \left|\frac{\Omega(x'-y)}{|x'-y|^d} - \frac{\Omega(x-y)}{|x-y|^d}\right| + \left|\frac{\Omega(y-x')}{|y-x'|^d} - \frac{\Omega(y-x)}{|y-x|^d}\right| \\
        &=\left|\frac{|x-y|^d\Omega(x'-y)-|x'-y|^d\Omega(x-y)}{|x'-y|^d|x-y|^d}\right| \!\! + \!\! \left|\frac{|y-x|^d\Omega(y-x')-|y-x'|^d\Omega(y-x)}{|y-x'|^d|y-x|^d}\right| \\
        &=: \mathrm{I} + \mathrm{II}. 
    \end{align*}
    Here
    \begin{align*}
        \mathrm{I} \leq 2^d\left|\frac{|x-y|^d\Omega(x'-y)-|x'-y|^d\Omega(x-y)}{|x-y|^{2d}}\right|,
    \end{align*}
    because 
    \[
    |x-y| \leq |x-x'| + |x'-y| \leq \frac{1}{2}|x-y| + |x'-y|
    \]
    and hence $|x-y| \leq 2|x'-y|$. We write further that
    \begin{align*}
    \mathrm{I} &\leq 2^d\left|\frac{(|x-y|^d-|x'-y|^d)\Omega(x-y)}{|x-y|^{2d}}\right| + 2^d\left|\frac{|x-y|^d(\Omega(x'-y)-\Omega(x-y))}{|x-y|^{2d}}\right|  \\
    & =: \mathrm{J} + \mathrm{JJ}.
    \end{align*}
    Note that 
    \begin{align*}
    \mathrm{J} &= 2^d |\Omega(x-y)||x-y|^{-2d}||x-y|^d-|x'-y|^d| \\
    &\leq 2^d \|\Omega\|_{L^\infty} |x-y|^{-2d}||x-y|^d-|x'-y|^d|.
    \end{align*}
    By the mean value theorem applied to the function $z\mapsto z^d$, we get 
    \[
    ||x-y|^d-|x'-y|^d| \leq d\max\{|x-y|,|x'-y|\}^{d-1}|x-x'| \leq 2^{d-1}d |x-y|^{d-1}|x-x'|,
    \]
    where in the last step we used that 
    \[
    |x'-y| \leq |x-x'|+|x-y| \leq 2|x-y|.
    \]
    Thus
    \[
    \mathrm{J} \leq 2^{2d-1}d\|\Omega\|_{L^\infty} |x-y|^{-d} \frac{|x-x'|}{|x-y|}.
    \]
    On the other hand, note that by Lemma \ref{lemma:howlipschitzspreadstowholespace} we have that
    \begin{align*}
        \mathrm{JJ} &= 2^d |x-y|^{-d} |\Omega(x'-y)-\Omega(x-y)| \\
        &\leq 2^{d+1} M |x-y|^{-d} \min\left\{\frac{|x-x'|}{|x'-y|},\frac{|x-x'|}{|x-y|}\right\} \\
        &\leq 2^{d+2} M |x-y|^{-d} \frac{|x-x'|}{|x-y|}.
    \end{align*}
    Thus we get that 
    \[
    \mathrm{I} \leq \omega_{\mathrm{I}}\left(\frac{|x-x'|}{|x-y|}\right)|x-y|^{-d},
    \]
    where the modulus of continuity $\omega_{\mathrm{I}}$ satisfies 
    \[
    \omega_{\mathrm{I}}(t) \eqsim t.
    \]
    In particular $\omega_{\mathrm{I}}$ is increasing, subadditive and satisfies $\omega_{\mathrm{I}}(0)=0$ as well as the Dini condition
    \[
    \int_0^1\omega_{\mathrm{I}}(t)\frac{\dif t}{t}<\infty.
    \]
    The estimate for II is similar. In particular, one estimates  
    \begin{align*}
    \mathrm{II} &\leq 2^d\left|\frac{(|x-y|^d-|x'-y|^d)\Omega(y-x)}{|x-y|^{2d}}\right| + 2^d\left|\frac{|x-y|^d(\Omega(y-x')-\Omega(y-x))}{|x-y|^{2d}}\right| \\
    &=: \mathrm{K} + \mathrm{KK}
    \end{align*}
    and then continues with the same strategy as for I and ends up with the same modulus of continuity. Combining the estimates for I and II, we get 
    \[
    |K_\Omega(x',y) - K_\Omega(x,y)| + |K_\Omega(y,x') - K_\Omega(y,x)| \leq \omega\left(\frac{|x-x'|}{|x-y|}\right)|x-y|^{-d},
    \]
    where $\omega := 2\omega_{\mathrm{I}}$ is increasing, subadditive and satisfies $\omega(0)=0$ as well as the Dini condition.
\end{proof}

\subsection*{Acknowledgements}
{The authors thank Tuomas Hyt\"onen for helpful comments on the manuscript. 

J.S.\ was supported by the Research Council of Finland through project 358180 (to Timo H\"{a}nninen). A.L. is supported by the Research Council of Finland through the Finnish Centre of Excellence in Randomness and Structures (grant No. 364208 to Tuomas Hyt\"onen).
}

\bibliographystyle{plain}
\bibliography{mainreferences}

\begin{thebibliography}{10}

\bibitem{BR1980}
Colin Bennett and Karl Rudnick.
\newblock On {L}orentz-{Z}ygmund spaces.
\newblock {\em Dissertationes Math. (Rozprawy Mat.)}, 175:67, 1980.

\bibitem{Bow2001}
Marcin Bownik.
\newblock Inverse volume inequalities for matrix weights.
\newblock {\em Indiana Univ. Math. J.}, 50(1):383--410, 2001.

\bibitem{BC2023}
Marcin Bownik and David Cruz-Uribe.
\newblock Extrapolation and factorization of matrix weights, 2023.
\newblock Preprint.

\bibitem{CG2001}
Michael Christ and Michael Goldberg.
\newblock Vector {$A_2$} weights and a {H}ardy-{L}ittlewood maximal function.
\newblock {\em Trans. Amer. Math. Soc.}, 353(5):1995--2002, 2001.

\bibitem{DTL2020}
Francesco Di~Plinio, Tuomas~P. Hyt\"onen, and Kangwei Li.
\newblock Sparse bounds for maximal rough singular integrals via the {F}ourier transform.
\newblock {\em Ann. Inst. Fourier (Grenoble)}, 70(5):1871--1902, 2020.

\bibitem{Duo2001}
Javier Duoandikoetxea.
\newblock {\em Fourier analysis}, volume~29 of {\em Graduate Studies in Mathematics}.
\newblock American Mathematical Society, Providence, RI, 2001.
\newblock Translated and revised from the 1995 Spanish original by David Cruz-Uribe.

\bibitem{Geh1973}
Frederick~W. Gehring.
\newblock The {$L\sp{p}$}-integrability of the partial derivatives of a quasiconformal mapping.
\newblock {\em Acta Math.}, 130:265--277, 1973.

\bibitem{Gol2003}
Michael Goldberg.
\newblock Matrix {$A_p$} weights via maximal functions.
\newblock {\em Pacific J. Math.}, 211(2):201--220, 2003.

\bibitem{GHWY2022}
Weichao Guo, Jianxun He, Huoxiong Wu, and Dongyong Yang.
\newblock Boundedness and compactness of commutators associated with {L}ipschitz functions.
\newblock {\em Anal. Appl. (Singap.)}, 20(1):35--71, 2022.

\bibitem{GWY2021}
Weichao Guo, Huoxiong Wu, and Dongyong Yang.
\newblock A revisit on the compactness of commutators.
\newblock {\em Canad. J. Math.}, 73(6):1667--1697, 2021.

\bibitem{HLTY2023}
Tuomas Hyt\"onen, Kangwei Li, Jin Tao, and Dachun Yang.
\newblock The {$L^p$}-to-{$L^q$} compactness of commutators with {$p>q$}.
\newblock {\em Studia Math.}, 271(1):85--105, 2023.

\bibitem{HOS2023}
Tuomas Hyt\"onen, Tuomas Oikari, and Jaakko Sinko.
\newblock Fractional {B}loom boundedness and compactness of commutators.
\newblock {\em Forum Math.}, 35(3):809--830, 2023.

\bibitem{Hyt2021}
Tuomas~P. Hyt\"onen.
\newblock The {$L^p$}-to-{$L^q$} boundedness of commutators with applications to the {J}acobian operator.
\newblock {\em J. Math. Pures Appl. (9)}, 156:351--391, 2021.

\bibitem{IM2019}
Joshua Isralowitz and Kabe Moen.
\newblock Matrix weighted {P}oincar\'e{} inequalities and applications to degenerate elliptic systems.
\newblock {\em Indiana Univ. Math. J.}, 68(5):1327--1377, 2019.

\bibitem{LL2022}
Michael Lacey and Ji~Li.
\newblock Compactness of commutator of {R}iesz transforms in the two weight setting.
\newblock {\em J. Math. Anal. Appl.}, 508(1):Paper No. 125869, 11, 2022.

\bibitem{Lau2023}
Aapo Laukkarinen.
\newblock Convex body domination for a class of multi-scale operators, 2023.
\newblock Preprint.

\bibitem{Lau2024}
Aapo Laukkarinen.
\newblock Convex body domination for rough singular integrals, 2024.
\newblock Preprint.

\bibitem{LLO2024}
Andrei~K. Lerner, Emiel Lorist, and Sheldy Ombrosi.
\newblock Bloom weighted bounds for sparse forms associated to commutators.
\newblock {\em Math. Z.}, 306(4):Paper No. 73, 32, 2024.

\bibitem{KLi2022}
Kangwei Li.
\newblock Multilinear commutators in the two-weight setting.
\newblock {\em Bull. Lond. Math. Soc.}, 54(2):568--589, 2022.

\bibitem{LYZ2023}
Shenyu Liu, Dongyong Yang, and Ciqiang Zhuo.
\newblock Matrix weighted {K}olmogorov-{R}iesz's compactness theorem.
\newblock {\em Front. Math.}, 18(5):1167--1189, 2023.

\bibitem{Rou2003}
Svetlana Roudenko.
\newblock Matrix-weighted {B}esov spaces.
\newblock {\em Trans. Amer. Math. Soc.}, 355(1):273--314, 2003.

\bibitem{Uch1978}
Akihito Uchiyama.
\newblock On the compactness of operators of {H}ankel type.
\newblock {\em Tohoku Math. J. (2)}, 30(1):163--171, 1978.

\end{thebibliography}

\end{document}